\documentclass[preprint,11pt]{elsarticle}

\usepackage{amsmath,amssymb,amsthm,amsfonts,amsopn}
\usepackage{graphicx}
\usepackage{stmaryrd}
\usepackage{algorithmic}
\usepackage[margin=1in]{geometry}
\usepackage[hidelinks]{hyperref}
\usepackage{lineno}

\journal{Journal of Computational Physics}

\newtheorem{lemma}{Lemma}
\newtheorem{proposition}{Proposition}

\theoremstyle{remark}

\newcommand{\paren}[1]{\left(#1\right)}
\newcommand{\jump}[1]{\llbracket#1\rrbracket}

\newcommand{\at}[2]{\left. #1 \right|_{#2}}
\newcommand{\grad}[1]{\nabla #1}
\newcommand{\mb}[1]{\mathbf{#1}}
\newcommand{\mc}[1]{\mathcal{#1}}
\newcommand{\wh}[1]{\widehat{#1}}
\newcommand{\wt}[1]{\widetilde{#1}}
\newcommand{\bm}[1]{\boldsymbol{#1}}

\newcommand{\norm}[1]{\left\lVert #1 \right\rVert}

\newcommand{\vph}{\varphi}

\newcommand{\R}{\mathbb{R}}

\begin{document}

\begin{frontmatter}

\title{A Correction Function-based KFBI Method for Brinkman Interface Problems}

\author[upenn]{Han Zhou}
\ead{hzhou24@sas.upenn.edu}

\author[sjtu]{Wenjun Ying\corref{cor1}}
\ead{wying@sjtu.edu.cn}

\cortext[cor1]{Corresponding author}

\address[upenn]{Department of Mathematics, University of Pennsylvania, Philadelphia, PA 19104, USA}
\address[sjtu]{School of Mathematical Sciences, MOE-LSC and Institute of Natural Sciences, Shanghai Jiao Tong University, Minhang, Shanghai 200240, China}

\begin{abstract}
In this work, we propose a correction-function-based kernel-free boundary integral (CF-KFBI) method for solving Stokes- and Brinkman-type interface problems. We begin by recasting the original interface problem with discontinuous coefficients as boundary integral equations, in which the integral operators can be interpreted as boundary data for potential functions that satisfy simpler interface problems without coefficient discontinuities. Each such interface problem is discretized using a corrected Marker-and-Cell (MAC) scheme. Within a narrow band around the interface, we introduce a local correction function that represents the solution jump, leading to a local Cauchy problem. This problem is solved with a collocation method, for which we provide criteria for a minimal choice of collocation points and prove solvability. Several numerical experiments, including both fixed- and moving-interface problems, are presented to demonstrate the accuracy and efficiency of the proposed method.
\end{abstract}

\begin{keyword}
Brinkman flow \sep interface problems \sep kernel-free boundary integral \sep correction function method \sep Cartesian grids
\end{keyword}

\end{frontmatter}

\section{Introduction}
Many applications in science and engineering require the solution of Brinkman-type interface problems in domains containing materials with sharply varying viscosity, porosity, and permeability~\cite{durlofsky1987analysis,auriault2009domain,Brinkman1949, NieldBejan2017,Pozrikidis1992, BarthesBiesel2016}. 
Such models arise, for example, in free-flow/porous-media coupling for groundwater transport and enhanced oil recovery, where the Brinkman equations interpolate between Darcy flow for the porous media and the incompressible Stokes equations for the viscous fluid~\cite{NieldBejan2017,Lesinigo2011DarcyBrinkman}. 
Related formulations appear in filtration, tissue perfusion, and other biological transport processes involving discontinuous viscosity and interfacial stresses~\cite{Kahshan2020DarcyBrinkmanFiltration, Rohan2021BiotDarcyBrinkman, KhaledVafai2003BioTissues}. 
Across these applications, the presence of discontinuous coefficients, geometric complexity, and singular interface forces introduces substantial numerical challenges and motivates the development of robust solvers.

Traditional numerical methods for interface problems of elliptic partial differential equations (PDEs) typically employ body-fitted meshes that conform to the interface geometry and are combined with finite element or finite difference discretizations~\cite{Babuska1970,Guyomarch2009,Bramble1996,Wang2004,xie2008uniformly,LiMelenkWohlmuthZou2010FEMInterface}. In the setting of two-phase incompressible flows, interface-fitted methods have been developed and studied in depth; see, for instance,~\cite{GrossReusken2011TwoPhase} for a systematic treatment of interface-fitted FEM formulations, curvature forces, and surface-tension effects. Although these body-fitted methods can deliver highly accurate geometric resolution, they rely on high-quality mesh generation and often require remeshing as the interface deforms, which significantly complicates their implementation and efficiency in higher dimensions and in problems involving large deformations or even topological changes.

To avoid remeshing, many unfitted or Cartesian-grid methods have been proposed for interface problems. 
On the finite-difference side, the immersed boundary (IB) method represents interfacial forces as regularized delta sources on a fixed Eulerian grid and has been widely used for fluid–structure interaction in incompressible viscous flows~\cite{Peskin2002IB}. 
Sharp-interface variants instead modify the discrete operators near the interface to enforce jump conditions: the immersed interface method (IIM) and its extensions~\cite{LeVequeLi1994IIM,LeVequeLi1997IIMStokes,Li2003IIMOverview,LiIto2006IIMBook}, the matched interface and boundary (MIB) method~\cite{ZhouZhaoFeigWei2006MIB,YuZhouWei2007MIB,XiaWei2014MIBGalerkin}, and the correction function method (CFM)~\cite{MarquesNaveRosales2011CFM}. 
Related ghost-fluid, embedded-boundary/cut-cell, and difference-potential formulations embed discontinuities and interface conditions into level-set or implicitly described geometries on simple background meshes~\cite{FedkiwAslamMerrimanOsher1999GFM,JohansenColella1998EmbeddedBoundary,AlbrightEpshteynMedvinskyXia2017DPM}, but they typically rely on intricate local constructions that are often tailored to specific PDEs or jump conditions. 
On the finite-element side, extended and generalized FEM (XFEM/GFEM) enrich standard polynomial spaces by interface-adapted basis functions to capture discontinuities or singularities without mesh conformity~\cite{MoesDolbowBelytschko1999XFEM,BelytschkoGracieVentura2009XFEMReview}, while immersed finite element (IFE) methods construct modified basis functions on background meshes so that the jump conditions are satisfied weakly or strongly at the interface~\cite{LiLinWu2003IFE,LiLinRogers2004IFESpace,LiYang2005IFEElasticity,HeLinLinZhang2013IFEParabolic}. 
These unfitted strategies largely eliminate the need for remeshing and are well suited for moving and highly deforming interfaces; however, the development of robust and efficient solvers for Stokes/Brinkman equations remains an active area of research.

Boundary integral equation (BIE) methods provide an attractive alternative by reducing the problem dimension and enforcing interface conditions via layer potentials. 
For homogeneous Stokes flow, classical single- and double-layer formulations and their variants yield second-kind integral equations with favorable conditioning and have been applied extensively to suspensions and droplet dynamics~\cite{PowerMiranda1987CDL,GreengardKropinski2004StokesPeriodic}. 
Pozrikidis’ monograph gives a comprehensive account of such formulations for linearized viscous flow, including vesicle and capsule dynamics~\cite{Pozrikidis1992BIE}. 
Boundary integral formulations have also been adopted for porous-media interface problems~\cite{ahmadi_cortez_fujioka_2017}. 
For coupled Darcy–Stokes systems, BIE methods have been developed in conjunction with domain decomposition techniques~\cite{TLUPOVA2009158,Boubendir2013,TLUPOVA2022110824}. 
Although high-order quadratures and fast solvers have been constructed for these BIEs in geometrically complex settings, enabling large-scale simulations~\cite{HouLowengrubShelley2001BIM,GreengardKropinski2004StokesPeriodic,YingBirosZorin2006BIE3D,AMBROSE2013168}, accurate and efficient boundary integral implementations still require substantial algorithmic complexity, particularly when off-surface evaluation and volume integrals are needed.

The kernel-free boundary integral method was introduced by Ying~\cite{Ying2007} for efficiently solving elliptic PDEs with complex interfaces, and has since been extended to higher-order schemes and nonlinear problems~\cite{xie2020fourth,zhou2023kernel}. 
In~\cite{ZhouYing2024Correction}, we incorporated a correction function into the KFBI framework to solve Poisson interface problems via near-interface corrections and interpolations, which significantly simplifies the treatment of jump terms, especially in higher dimensions and for high-order schemes. 
There, a polynomial collocation method was proposed to solve the local Cauchy problem for the correction function; however, the solvability of the collocation system, particularly under a carefully chosen minimal set of collocation points, was left open. 
In this work, we generalize the correction-function-based KFBI method of~\cite{ZhouYing2024Correction} to Stokes/Brinkman interface problems. 
We formulate boundary integral equations for different coefficient configurations and solve them efficiently on a staggered grid using a corrected MAC scheme. 
In particular, we present the collocation strategy used to obtain the correction function and analyze minimal choices of collocation points, proving that such a choice yields an invertible linear system. 

The remainder of the paper is organized as follows. 
In Section~\ref{sec2}, we introduce the Brinkman equation and its associated interface jump conditions. 
Section~\ref{sec3} is devoted to the boundary integral formulation and the properties of the resulting boundary integral equations. 
In Section~\ref{sec4}, we present the numerical discretization of these boundary integral equations. 
Section~\ref{sec5} describes the numerical method for solving the local Cauchy problem for the correction function. 
Numerical experiments are reported in Section~\ref{sec6}, and Section~\ref{sec7} concludes with a brief summary and a discussion of possible directions for future work.

\section{Governing equation}\label{sec2}

\begin{figure}[htbp]
    \centering
    \includegraphics[width=0.4\linewidth]{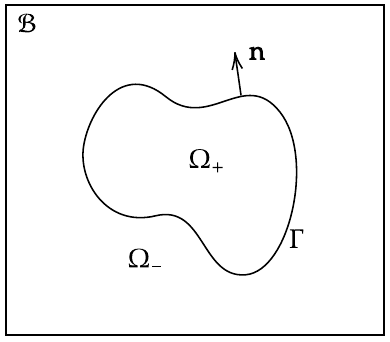}
    \caption{Schematic of the computational domain $\mc B$, the subdomains $\Omega_{\pm}$, and the interface $\Gamma$.}
    \label{fig:domain_setup}
\end{figure}

Let $\mathcal{B}\subset \R^2$ be a rectangular domain and let $\Omega_+ \subset \mathcal{B}$ be a complex-shaped open subdomain embedded in $\mc B$ with a smooth boundary $\Gamma = \partial\Omega_+$, see Figure~\ref{fig:domain_setup}. Assume that $\text{dist}(\Omega_+, \partial\mathcal{B}) \geq d_0 > 0$. Let $\Omega_- = \mathcal{B}\setminus\overline{\Omega_+}$ be the complementary domain. Denote by $\bm n$ the unit normal to $\Gamma$ pointing from $\Omega_+$ to $\Omega_-$.
We are interested in the following interface problem for an incompressible fluid flow
\begin{subequations}\label{eqn:stokes-ip}
\begin{align}
    \mu \Delta \bm u - \kappa \bm u - \grad p &= \bm f, \quad \grad\cdot \bm u = 0, \quad \text{ in } \Omega_+ \cap \Omega_-, \\
    \jump{ \bm u } &= \bm g_1,\quad  \jump{\bm\sigma_\mu \bm n} = \bm g_2 , \quad\text{ on }\Gamma,
\end{align}
\end{subequations}
where the stress tensor $\bm \sigma_\mu$ is given by $\bm \sigma_\mu(\bm u, p) = 2\bm D(\bm u)- p  I $ with $\bm D(\bm u) = \frac{1}{2}(\grad \bm u + \grad \bm u^T) $ being the rate-of-deformation tensor. We will denote by $\bm \sigma$ the stress tensor $\bm\sigma_\mu$ with $\mu = 1$.
The coefficient $\mu > 0$ is the viscosity of the fluid.
For $\kappa=0$, this is the two-phase Stokes equation, and for $\kappa > 0$, the problem is the two-phase Brinkman equation describing fluid flow through porous media. It can also arise from the time discretization of the two-phase unsteady Stokes or Navier-Stokes equations.
In this work, we consider the case where the coefficients are piecewise constant, i.e., $\mu=\mu(\bm x)=\mu_\pm$ and $\kappa=\kappa(\bm x)=\kappa_\pm$ for $\bm x\in \Omega_\pm$.
For a piecewise continuous function $f$, the jump $\jump{\cdot}$ is defined as
\begin{equation}
    \jump{f}(\bm x) = \lim_{\varepsilon\to 0+} (f(\bm x - \varepsilon\bm n) - f(\bm x + \varepsilon \bm n)), \quad \bm x \in \Gamma.
\end{equation}
The function $\bm f\in L^2(\mc B)$ represents a given body force.
The functions $\bm g_1 \in H^{\frac{1}{2}}(\Gamma)$ and $\bm g_2\in H^{-\frac{1}{2}}(\Gamma)$ represent the jumps in velocity and normal stress on $\Gamma$, respectively. For an impermeable interface, it is common to set $\bm g_1 = \bm 0$, meaning that the velocity field is continuous across the interface. The function $\bm g_2$ arises from surface tension or the bending force exerted on the fluid by the interface. The boundary condition on $\partial\mathcal{B}$ is chosen to be the no-slip condition. Other boundary conditions, such as inhomogeneous velocity boundary conditions or bi-periodic ones, can also be considered. Since $\partial\mathcal{B}$ has a regular shape, boundary conditions on it are relatively easy to handle compared with those on the complex interface.

The velocity $\bm u$ is uniquely determined by an integration-by-parts argument together with the Dirichlet boundary condition on $\partial\mc B$. However, the pressure $p$ is only determined up to an additive constant. Therefore, we impose the additional constraint
$\int_{\mc B} p\,d\bm x = 0$
to seek the solution $p$ with mean zero. 
In this way, the solution to~\eqref{eqn:stokes-ip} is uniquely determined.

\section{Boundary integral formulations}\label{sec3}
\subsection{Integral operators}
Let $\delta$ be the Dirac delta function. Given a constant $c^2 \geq 0$, for every fixed $\bm x \in\mathcal{B}$, let $(G_{\bm u }, G_p)$ be the Green function pairs defined by
\begin{equation}
\begin{aligned}
    (\Delta_{\bm y} - c^2) G_{\bm u }(\bm y , \bm x ) - \nabla_{\bm y} G_p(\bm y , \bm x ) &= \delta(\bm y - \bm x )  I , \quad \bm y \in\mathcal{B},\\
    \nabla_{\bm y} \cdot G_{\bm u }(\bm y , \bm x ) &= 0,  \quad \bm y \in \mathcal{B}, \\
    G_{\bm u}(\bm y, \bm x) & = \bm 0,  \quad \bm y \in \partial\mathcal{B}.
\end{aligned}
\end{equation}
with the mean zero constraint $\int_{\mc B} G_p (\mb y,\bm x)\,d\bm y = 0$.
Due to the boundary condition on $\mc B$, the analytic expressions of $G_{\bm u}, G_p$ may not be available. They are closely related to the free-space Green's functions $(G_{\bm u}^f, G_p^f)$; particularly, they only differ by a smooth function since $(G_{\bm u}-G_{\bm u}^f, G_p - G_p^f)$ satisfies the homogeneous Brinkman equation in $\mc B$, whose solution is smooth.

Let $\boldsymbol \vph $ and $\boldsymbol \psi $ be two surface density functions defined on $\Gamma$.
For $\bm y\in \mathcal{B}$, define single-layer potentials 
\begin{align}
      V_{\bm u } \bm \psi (\bm y) &= \int_{\Gamma} G_{\bm u }(\bm y , \bm x ) \bm \psi (\bm x ) d s_{\bm x }, \\ 
      V_p \bm \psi (\bm y) &= \int_{\Gamma} G_p(\bm y , \bm x ) \bm \psi (\bm x ) d s_{\bm x },
\end{align}
and double-layer potentials
\begin{align}
      D_{\bm u} \bm \vph (\bm y) &= \int_{\Gamma} \bm \vph (\bm x )\cdot \bm{\sigma}\paren{ G_{\bm u }(\bm y , \bm x ), G_p(\bm y , \bm x ) }\bm{n}(\bm x ) d s_{\bm x }, \\ 
      D_p \bm \vph (\bm y)& = 2 \int_{\Gamma} \frac{\partial G_p(\bm y , \bm x )}{\partial \bm{n}_{\bm x }} \bm \vph (\bm x ) d s_{\bm x } .
\end{align}
Let $\bm{f}$ be a function defined in $\mathcal{B}$. We also define volume potentials
\begin{align}
    N_{\bm u } \bm{f}(\bm{p}) &= \int_{\mathcal{B}}  G_{\bm u }(\bm{p}, \bm{q})\bm{f}(\bm{q}) d \bm{q}, \\
    N_p \bm{f}(\bm{p}) &= \int_{\mathcal{B}}  G_p(\bm{p}, \bm{q}) \bm{f}(\bm{q}) d \bm{q} .
\end{align}

Note that the Green functions $(G_{\bm u}, G_p)$ defined in $\mathcal{B}$ with a boundary condition usually do not admit an analytic expression. They differ from those defined in free space, also referred to as the Stokeslet for $\kappa=0$, only by a smooth function. 
Let $\gamma^\pm$ be the trace of piecewise continuous functions on $\Gamma$ from the side of $\Omega^\pm$, respectively.
With the jump relation of the potential functions known, we can define the single-layer, double-layer, adjoint double-layer, and hyper-singular boundary integral operators $\mc V, \mc D, \mc D', \mc H$ for $\bm x \in\Gamma$. Then, we define the following integral operators
\begin{align}
      \mc V \bm \psi (\bm x) &= \frac{1}{2}\paren{\gamma^+  V_{\bm u }\bm \psi (\bm x ) + \gamma^-  V_{\bm u }\bm \psi (\bm x )},  \label{eqn:sl-bi} \\
      \mc D'\bm \psi (\bm x) &= \frac{1}{2}\paren{\gamma^+  \bm{\sigma}(  V_{\bm u }\bm \psi ,  V_{p}\bm \psi )(\bm x ) \bm{n}(\bm x) + \gamma^-  \bm{\sigma}(  V_{\bm u }\bm \psi ,  V_{p}\bm \psi )(\bm x ) \bm{n}(\bm x)}, \label{eqn:adl-bi}  \\
     \mc D  \bm \vph (\bm x) &= \frac{1}{2}\paren{ \gamma^+  D_{\bm u }\bm \vph (\bm x ) + \gamma^-  D_{\bm u }\bm \vph (\bm x ) }, \label{eqn:dl-bi} \\
     \mc H  \bm \vph (\bm x) &= \frac{1}{2}\paren{ \gamma^+ \bm{\sigma} ( D_{\bm u }\bm \vph ,  D_{p}\bm \vph )(\bm x )\bm{n}(\bm x) + \gamma^- \bm{\sigma} ( D_{\bm u }\bm \vph ,  D_{p}\bm \vph )(\bm x )\bm{n}(\bm x)}, \label{eqn:hs-bi}  \\
     \mc N_D \bm f(\bm x) & = \frac{1}{2}\paren{\gamma^+ N_{\bm u}\bm f(\bm x) + \gamma^- N_{\bm u}\bm f(\bm x) }, \label{eqn:vp-D}\\
     \mc N_N \bm f(\bm x) & = \frac{1}{2}\paren{\gamma^+ \bm \sigma(N_{\bm u}\bm f, N_p \bm f)(\bm x) + \gamma^- \bm \sigma(N_{\bm u}\bm f, N_p \bm f)(\bm x)} .\label{eqn:vp-N}
\end{align}
Now, let us reduce the interface problem~\eqref{eqn:stokes-ip} to a boundary integral equation.
We redefine $\wt p = p/\mu$,  $\wt{\bm \sigma} = \grad \bm u + \grad \bm u^T - \wt p I$ and $\wt {\bm f} = \bm f/\mu$. 

Therefore, the Green function defined here has the same singularity behavior as the free-space one and, thus, the potential functions defined using it have the same jump relations on the interface $\Gamma$. Let $(\bm v, q)$ be a potential function pair; by classical potential theory, we know that it satisfies the interface problem with constant coefficients
\begin{subequations}\label{eqn:equiv-ip}
\begin{align}
    \Delta  \bm v - c^2 \bm v - \grad q &= \bm F, \quad \grad \cdot \bm v = 0,\quad \text{ in }\mathcal{B}\setminus\Gamma, \\
    \jump{\bm v} &= \bm \Phi, \quad  \jump{\bm \sigma \bm n} = \bm \Psi, \quad \text{ on }\Gamma,\\
    \bm v &= \bm 0, \quad \text{ on }\partial\mathcal{B},
\end{align}    
\end{subequations}
with $\int_{\mc B}q\,d\bm x = 0$.
Here, the data functions $\bm F, \bm \Phi, \bm \Psi$ are specified as follows:
\begin{itemize}
    \item For $\bm v = V_{\bm u}\bm\psi, q = V_p \bm\psi$, we set $\bm F = \bm 0$, $\bm \Phi = \bm 0$, $\bm \Psi =-\bm\psi$,
    \item For $\bm v = D_{\bm u}\bm\psi, q = D_p \bm\psi$, we set $\bm F = \bm 0$, $\bm \Phi = \bm \vph$, $\bm \Psi =\bm 0$,
    \item For $\bm v = N_{\bm u}\bm\psi, q = N_p \bm\psi$, we set $\bm F = \bm f$, $\bm \Phi = \bm 0$, $\bm \Psi =\bm 0$.
\end{itemize}

\subsection{Boundary integral equations}
First, we consider a simpler case $c^2 = \kappa_+/\mu_+ = \kappa_-/\mu_-$.
Let $\bm \psi = \jump{\bm \sigma(\bm u, p/\mu)\bm n}$ be an unknown density function and $\wt{\bm f} = \bm f/\mu$. Then, the solution can be represented as
\begin{equation}
    \bm u = D_{\bm u} \bm g_1 - V_{\bm u} \bm \psi  + N_{\bm u} \wt {\bm f}, \quad \wt p = D_p \bm g_1 - V_p \bm \psi + N_p \wt {\bm f},
\end{equation}
where the potentials are defined using the Green function associated with the constant $c^2 = \kappa/\mu$.
Using the jump condition $\jump{\mu \bm \sigma(\bm u, p/\mu)\bm n} = \bm g_2$, we can obtain a Cauchy singular integral equation of the second kind for $\bm \psi$,
\begin{equation}\label{eqn:bie-1}
    \bm \psi + 2 A_\mu \mc D'\bm \psi = \frac{2}{\mu_++\mu_-}\bm g_2 + 2 A_\mu (\mc H \bm g_1 + \mc N_N \wt {\bm f}) ,
\end{equation}
where $A_\mu = (\mu_- - \mu_+)/(\mu_- + \mu_+) \in (-1,1)$.
The adjoint-double layer integral is a Cauchy singular integral; although not compact in general, the Fredholm alternative is still valid~\cite{HsiaoWendland2008}. Therefore, the uniqueness of the solution implies the solvability of the equation.
\begin{proposition}\label{thm:uniq-1}
    The boundary integral equation~\eqref{eqn:bie-1} has a unique solution.
\end{proposition}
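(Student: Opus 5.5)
By the Fredholm alternative quoted just before the statement, it suffices to prove uniqueness. We therefore show that the homogeneous equation $\bm\psi + 2A_\mu \mc D'\bm\psi = \bm 0$ has only the trivial solution. The plan is the classical energy argument: reinterpret a solution of the homogeneous equation as the normal-stress jump of a two-phase Brinkman flow with zero data, and then use uniqueness of the interface problem~\eqref{eqn:stokes-ip}, which was established in Section~\ref{sec2} by integration by parts.

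\emph{Step 1 (construct the field).} Let $\bm\psi$ solve the homogeneous equation. Set $\bm w = -V_{\bm u}\bm\psi$ and $q = -V_p\bm\psi$. By the single-layer case listed after~\eqref{eqn:equiv-ip}, the pair $(\bm w,q)$ satisfies $\Delta\bm w - c^2\bm w - \grad q = \bm 0$ and $\grad\cdot\bm w = 0$ in $\mc B\setminus\Gamma$. Moreover $\bm w = \bm 0$ on $\partial\mc B$, $\jump{\bm w} = \bm 0$, and $\jump{\bm\sigma(\bm w,q)\bm n} = \bm\psi$. Define the pressure $p = \mu q$, with $\mu = \mu_\pm$ in $\Omega_\pm$. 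Since $c^2 = \kappa_\pm/\mu_\pm$, we get $\mu\Delta\bm w - \kappa\bm w - \grad p = \bm 0$ in each subdomain.

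\emph{Step 2 (jump relations).} Write the one-sided tractions as
\[
\gamma^\pm\bm\sigma(\bm w,q)\bm n = -\mc D'\bm\psi \pm \tfrac{1}{2}\bm\psi .
\]
The sign is fixed by the convention $\jump{f} = \gamma^+ f - \gamma^- f$, which gives $\gamma^+ - \gamma^- = \bm\psi$, together with~\eqref{eqn:adl-bi}, which gives that the average equals $-\mc D'\bm\psi$. The jump in viscous traction is then
\[
\jump{\mu\bm\sigma(\bm w,q)\bm n} = \mu_+\gamma^+ - \mu_-\gamma^- = -(\mu_+-\mu_-)\mc D'\bm\psi + \tfrac{\mu_++\mu_-}{2}\bm\psi = \tfrac{\mu_++\mu_-}{2}\paren{\bm\psi + 2A_\mu\mc D'\bm\psi} = \bm 0 .
\]
This is exactly how~\eqref{eqn:bie-1} was derived, now with zero data.

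\emph{Step 3 (conclude).} The pair $(\bm w, p)$ therefore solves~\eqref{eqn:stokes-ip} with $\bm f = \bm 0$, $\bm g_1 = \bm 0$, $\bm g_2 = \bm 0$, and no-slip on $\partial\mc B$. We prove the uniqueness claim directly: multiply by $\bm w$ in each $\Omega_\pm$, integrate by parts, and add the two contributions. The interface terms cancel because both $\bm w$ and $\mu\bm\sigma\bm n$ are continuous, which yields
\[
\sum_\pm \int_{\Omega_\pm}\paren{2\mu_\pm|\bm D(\bm w)|^2 + \kappa_\pm|\bm w|^2} = 0 .
\]
By Korn's inequality and $\bm w|_{\partial\mc B} = \bm 0$, this forces $\bm w \equiv \bm 0$. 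Then $\grad p = \bm 0$ in each phase, so $q$ is piecewise constant, say $q_\pm$. Since $\bm w\equiv\bm 0$, the traction jump becomes $\bm\psi = \jump{\bm\sigma(\bm w,q)\bm n} = -(q_+ - q_-)\bm n$, and the condition $\mu_+ q_+ = \mu_- q_-$ also holds.

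\emph{Main obstacle: removing the pressure constants.} The energy argument leaves $\bm\psi = \alpha\bm n$ undetermined, so the issue is to show $\alpha = 0$. First compute $V\bm n$ directly: the piecewise-constant pressure with $\bm w = \bm 0$ solves the constant-coefficient problem~\eqref{eqn:equiv-ip} with $\bm\Psi = -\bm n$, up to the mean-zero normalization. This shows $V_{\bm u}\bm n = \bm 0$ and $V_p\bm n = \chi_{\Omega_+} - |\Omega_+|/|\mc B|$, so $\gamma^\pm\bm\sigma\bm n$ can be evaluated explicitly. Then $\mc D'\bm n$ equals minus the average pressure times $\bm n$, that is, $\paren{|\Omega_+|/|\mc B| - \tfrac12}\bm n$. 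Substituting $\bm\psi = \alpha\bm n$ into the homogeneous equation gives the scalar relation
\[
\alpha\paren{1 + 2A_\mu\paren{\tfrac{|\Omega_+|}{|\mc B|} - \tfrac12}} = 0 .
\]
Since $|A_\mu| < 1$ and $\bigl|\tfrac{|\Omega_+|}{|\mc B|} - \tfrac12\bigr| < \tfrac12$, the bracket is positive, so $\alpha = 0$. This step is the delicate one: the mean-zero pressure normalization is what makes the bracket nonzero, and the signs in it must be checked carefully.
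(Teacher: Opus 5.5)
Your proof is correct and follows the paper's route: view $(-V_{\bm u}\bm\psi,-V_p\bm\psi)$ as a two-phase Brinkman flow with zero data, remove the velocity with the energy identity, and remove the leftover pressure constants with the mean-zero normalization. Your last step is more careful than the paper's, which passes straight to $q\equiv C$ on $\mc B$ although $\jump{\mu\bm\sigma(\bm w,q)\bm n}=\bm 0$ only gives $\mu_+q_+=\mu_-q_-$; still, you do not need to compute $V_{\bm u}\bm n$, $V_p\bm n$ and $\mc D'\bm n$, because $\mu_+q_+=\mu_-q_-$ together with $q_+|\Omega_+|+q_-|\Omega_-|=\int_{\mc B}q\,d\bm x=0$ already forces $q_\pm=0$ (your bracket equals $2(\mu_-|\Omega_+|+\mu_+|\Omega_-|)/((\mu_++\mu_-)|\mc B|)$, the determinant of exactly this $2\times 2$ system).
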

\begin{proof}
    We only need to prove that the homogeneous equation of~\eqref{eqn:bie-1} has only the zero solution. Recall the jump relations of the single-layer potential $(\bm v, q)=(-V_{\bm u}\bm\psi, -V_p\bm \psi)$; then, the homogeneous equation of~\eqref{eqn:bie-1} implies that the single-layer potential satisfies the homogeneous interface problem~\eqref{eqn:stokes-ip} with $\jump{\bm v} =\bm 0$ and $\jump{\mu(\grad \bm v+\grad\bm v^T) - qI}\bm n = \bm 0$. Applying integration by parts and recalling the boundary condition $\bm v = \bm 0$ on $\partial\mc B$, we have
    \begin{equation}
        \mu_+\int_{\Omega_+}|D(\bm v)|^2\,\bm x + \mu_-\int_{\Omega_-}|D(\bm v)|^2\,\bm x = 0.
    \end{equation}
    Hence, $\bm v \equiv\bm 0$ and $q\equiv C$ in $\mc B$. In fact, due to the mean zero constraint, $C = 0$. As a result $\bm \psi = \jump{(\grad \bm v+\grad\bm v^T) - qI}\bm n = \bm 0$.
\end{proof}

For the generic coefficient case $\kappa_+/\mu_+ \neq \kappa_-/\mu_-$, let $\bm \vph$ and $\bm \psi$ be two unknown density functions. We exploit the idea of domain decomposition to represent the interior/exterior solutions separately with different potential functions. The interior solution is expressed as a sum of the double-layer and volume potentials
\begin{equation}
    \bm u\chi_{\Omega_+} = D_{\bm u}^+ \bm \vph + N^+_{\bm u} \wt{\bm f}\chi_{\Omega_+},\quad \wt p \chi_{\Omega_+} = D_p^+ \bm \vph + N^+_p \wt{\bm f}\chi_{\Omega_+},
\end{equation}
and the exterior solution as a sum of the single-layer and volume potentials
\begin{equation}
    \bm u\chi_{\Omega_-} = -V_{\bm u}^- \bm \psi + N^-_{\bm u} \wt{\bm f}\chi_{\Omega_-},\quad \wt p \chi_{\Omega_-} = -V_p^- \bm \psi + N^-_p \wt{\bm f}\chi_{\Omega_-},
\end{equation}
where the superscripts $\pm$ indicate that the potentials are defined using the Green function associated with the constant $c_{\pm}^2 = \kappa_\pm / \mu_\pm$.
With the above ansatz on the solution representation and applying the jump conditions of the interface problem~\eqref{eqn:stokes-ip}, we can obtain the following boundary integral system
\begin{subequations}\label{eqn:bie-2}
\begin{align}
    \frac{1}{2} \bm \vph  + \mc{D}^+ \bm \vph  + \mc{V}^-\bm \psi  &= \bm g_1 + \mc{N}_D^-(\wt{\bm{f}}\chi_{\Omega_-}) - \mc{N}_D^+(\wt{\bm{f}}\chi_{\Omega_+}),  \\
    \frac{1}{2}\bm \psi  + \frac{\mu^+}{\mu^-}\mc{H}^+ \bm \vph  + \mc{D}^{\prime,-} \bm \psi  &= \frac{1}{\mu^-}\bm g_2 + \mc N_N^- (\wt{\bm{f}}\chi_{\Omega_-}) - \frac{\mu^+}{\mu^-}\mc N_N^+ (\wt{\bm{f}}\chi_{\Omega_+}).    
\end{align}    
\end{subequations}
Since the integral operators $\mc V^-, \mc D^{\prime,-}, \mc D^+, \mc H^+$ are Fredholm operators of index zero~\cite[Theorem 7.6,7.8]{mclean2000strongly}, we can also apply the Fredholm alternative to demonstrate the solvability of the boundary integral equation~\eqref{eqn:bie-2}. 
\begin{proposition}
    The boundary integral equation~\eqref{eqn:bie-2} has a unique solution.
\end{proposition}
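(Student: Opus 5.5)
Since the system~\eqref{eqn:bie-2} is Fredholm of index zero, as noted just before the statement, the Fredholm alternative reduces the claim to uniqueness. I will therefore show that the homogeneous system, with $\bm g_1=\bm g_2=\bm 0$ and $\wt{\bm f}=\bm 0$, admits only $\bm\vph=\bm\psi=\bm 0$. The argument mirrors the proof of Proposition~\ref{thm:uniq-1}. Given a solution $(\bm\vph,\bm\psi)$ of the homogeneous system, define $(\bm v,q)$ piecewise by
\[
(\bm v,q)=(D_{\bm u}^+\bm\vph,\,D_p^+\bm\vph)\ \text{in }\Omega_+,\qquad (\bm v,q)=(-V_{\bm u}^-\bm\psi,\,-V_p^-\bm\psi)\ \text{in }\Omega_-.
\]
By construction each piece satisfies the Brinkman equations with the appropriate $c_\pm^2=\kappa_\pm/\mu_\pm$ in its own subdomain, and $\bm v=\bm 0$ on $\partial\mc B$ because $G_{\bm u}^-$ vanishes there.

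The first step is to verify that $(\bm v,q)$ solves the homogeneous interface problem~\eqref{eqn:stokes-ip}, with pressure $\mu q$. The jump relations give the one-sided traces
\[
\gamma^+D_{\bm u}^+\bm\vph=\tfrac12\bm\vph+\mc D^+\bm\vph,\qquad \gamma^-(-V_{\bm u}^-\bm\psi)=-\mc V^-\bm\psi,
\]
since $\mc D$ and $\mc V$ are defined as averages of the two traces and $V_{\bm u}$ is continuous across $\Gamma$. The first homogeneous equation then reads exactly $\jump{\bm v}=\bm 0$. Likewise, $\gamma^+\bm\sigma(D^+\bm\vph)\bm n=\mc H^+\bm\vph$ because the double layer has no traction jump, and
\[
\gamma^-\bm\sigma(-V^-\bm\psi)\bm n=-\bigl(\mc D^{\prime,-}\bm\psi-\tfrac12\bm\psi\bigr)
\]
up to the paper's sign convention $\bm\Psi=-\bm\psi$. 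Multiplying the second equation by $\mu^-$ should then give $\jump{\mu\,\bm\sigma(\bm v,q)\bm n}=\bm 0$. I expect this bookkeeping of signs and $\tfrac12$ terms to be the main obstacle. If the signs do not line up directly, I will re-derive the traction trace sign from $\bm\Psi=-\bm\psi$ together with the convention that $\bm n$ points into $\Omega_-$. This is consistent with how~\eqref{eqn:bie-2} was derived from the same ansatz, so it should close.

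Next comes the energy argument. Multiply each Brinkman equation by $\bm v$, integrate over $\Omega_\pm$, and integrate by parts. The interface terms cancel by the two homogeneous jump conditions, and the $\partial\mc B$ terms vanish by the no-slip condition. This yields
\[
\sum_\pm\int_{\Omega_\pm}\bigl(2\mu_\pm|\bm D(\bm v)|^2+\kappa_\pm|\bm v|^2\bigr)\,d\bm x=0.
\]
Hence $\bm D(\bm v)=\bm 0$. Together with continuity of $\bm v$ and $\bm v=\bm 0$ on $\partial\mc B$, this forces $\bm v\equiv\bm 0$ via Korn's inequality, or via rigid motions vanishing on $\partial\mc B$. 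Then $\grad q=\bm 0$ in each subdomain, and the traction condition $\jump{\mu q}\bm n=\bm 0$ makes $\mu q$ a global constant.

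The final step recovers the densities, and this is where the argument differs from Proposition~\ref{thm:uniq-1}: each density must be extracted from a single-sided potential. For the interior piece, $D^+\bm\vph$ is also defined in $\Omega_-$. There it satisfies the $c_+$ Brinkman system with zero velocity on $\partial\mc B$ and traction matching the interior traction, which is $-q\bm n$ with $q$ constant. I will apply the same energy argument in $\Omega_-$ to conclude that its velocity there is $\bm 0$. The velocity jump relation then gives $\bm\vph=\jump{D_{\bm u}^+\bm\vph}=\bm 0$. Similarly, $-V^-\bm\psi$ extended to $\Omega_+$ has zero trace on $\Gamma$ by continuity of the single layer, so it vanishes in $\Omega_+$ by uniqueness of the Dirichlet Brinkman problem. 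The traction jump gives $\bm\psi=\bm 0$ up to a multiple of $\bm n$ coming from a pressure constant. The mean-zero normalization of $G_p$ should eliminate this $\bm n$ ambiguity, analogously to the step $C=0$ in the proof of Proposition~\ref{thm:uniq-1}. If it does not, I will fall back on noting that the Fredholm alternative still gives solvability modulo this one-dimensional kernel.
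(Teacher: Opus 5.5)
Your route matches the paper's. You build the composite field from $D^+\bm\vph$ in $\Omega_+$ and $-V^-\bm\psi$ in $\Omega_-$, use the energy identity to get $\bm v\equiv\bm 0$, then continue the double layer into $\Omega_-$ and the single layer into $\Omega_+$, and read the densities off the jump relations. Your sign worry resolves in your favor. Since $\jump{f}=\gamma^+f-\gamma^-f$ and $\bm\Psi=-\bm\psi$, one has $\gamma^-\bm\sigma(V_{\bm u}^-\bm\psi,V_p^-\bm\psi)\bm n=\mc D^{\prime,-}\bm\psi+\tfrac12\bm\psi$, so $\jump{\mu\bm\sigma(\bm v,q)\bm n}/\mu^-$ is exactly the left side of the second equation.

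The genuine gap is the final step. You leave the pressure constant open and offer ``solvability modulo a one-dimensional kernel'' as a fallback, but that does not prove uniqueness, which is what is claimed. Your suggested fix, applying zero mean to the single layer alone as in Proposition~\ref{thm:uniq-1}, also does not close on its own. Write $q=C_\pm$ in $\Omega_\pm$, with $\mu_+C_+=\mu_-C_-$ from the traction condition, and $V_p^-\bm\psi=C'$ in $\Omega_+$; then $V_p^-\bm\psi=-C_-$ in $\Omega_-$. The zero mean of $V_p^-\bm\psi$ gives only $C'|\Omega_+|=C_-|\Omega_-|$. Meanwhile the traction jump gives $\bm\psi=(C'+C_-)\bm n$, which is nonzero whenever $C_-\neq 0$.

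The missing step is to get $C_+=0$ from the double layer, which you already control. Once you know $D_{\bm u}^+\bm\vph\equiv\bm 0$ in $\Omega_-$, the pressure $D_p^+\bm\vph$ is constant there. Continuity of the double-layer traction makes that constant equal to $C_+$, so $D_p^+\bm\vph\equiv C_+$ on all of $\mc B\setminus\Gamma$. Its zero mean, which is part of the characterization~\eqref{eqn:equiv-ip}, forces $C_+=0$, and hence $C_-=0$ since $\mu_->0$. Only then does $V_p^-\bm\psi$ vanish in $\Omega_-$. The paper's identity $0=\int_{\mc B}V_p^-\bm\psi\,d\bm x=C'|\Omega_+|$ then gives $C'=0$ and $\bm\psi=\bm 0$. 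The paper compresses the first half into ``$q\equiv C=0$'', but this ordering is what makes the argument work.

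A smaller point concerns the energy argument for $D^+\bm\vph$ on $\Omega_-$. There the interface term does not cancel as it does in the two-sided problem. It equals $C_+\int_\Gamma\gamma^-D_{\bm u}^+\bm\vph\cdot\bm n\,ds$, and it vanishes because that field is divergence-free in $\Omega_-$ and zero on $\partial\mc B$. You should say this explicitly.
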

\begin{proof}
    
Let $\bm v = (D_{\bm u}^+ \bm \vph)\chi_{\Omega_+} - (V_{\bm u}^- \bm \psi)\chi_{\Omega_-}$ and $q = (D_p^+ \bm \vph)\chi_{\Omega_+} - (V_p^- \bm \psi)\chi_{\Omega_-}$. The homogeneous case of~\eqref{eqn:bie-2} implies that $(\bm v, q)$ satisfies the homogeneous interface problem~\eqref{eqn:stokes-ip} with $\jump{\bm v} = \jump{\mu(\grad \bm v+\grad\bm v^T) - qI}\bm n = \bm 0$. Similarly, applying integration by parts and the mean zero constraint, we have $\bm v \equiv\bm 0$ and $q\equiv C = 0$ in $\mc B$.

Recall the jump relation of the double-layer potential; we have $\gamma^-\bm\sigma(D_{\bm u}^+ \bm \vph,D_p^+\bm \vph)\bm n = \gamma^+ \bm\sigma(D_{\bm u}^+ \bm \vph,D_p^+\bm \vph)\bm n = \bm 0$ on $\Gamma$. Then $(\bm v', q')=((D_{\bm u}^+\bm\psi)\chi_{\Omega^-}, (D_p^+\bm \psi)\chi_{\Omega^-})$ satisfies a homogeneous Brinkman boundary value problem in $\Omega^-$ with boundary conditions $\bm\sigma(\bm v', q')\bm n = C\bm n$ on $\Gamma$ and $\bm v' = 0$ on $\partial\mc B$. 
Applying integration by parts shows that $\bm v' \equiv0$ and $q' \equiv 0$. 

Using the jump relation of the single-layer potential, $\gamma^+ V_p^-\bm \psi = \gamma^- V_p^-\bm \psi \equiv \bm 0$. Then $(\bm v'', q'')=((-V_{\bm u}^-\bm\psi)\chi_{\Omega^+}, (-V_p^-\bm \psi)\chi_{\Omega^+})$ satisfies the homogeneous Brinkman boundary value problem in $\Omega_+$ with the boundary condition $\bm v'' = \bm 0$ on $\Gamma$. As a result, $\bm v'' \equiv \bm 0$ and $q'' \equiv C'$. Additionally, since 
    \begin{equation}
        0 = \int_{\mc B} -V_p^-\bm \psi\,d\bm x = - \int_{\Omega^+} V_p^-\bm \psi\,d\bm x-\int_{\Omega^-} V_p^-\bm \psi\,d\bm x = C ' \int_{\Omega^+} 1 \,d\bm x.
    \end{equation}
    Hence $C' = 0$.
    Therefore, $(D_{\bm u}^+\vph , D_p^+\bm\vph) = (V_{\bm u}^-\bm\psi , V_p^-\bm \psi) \equiv (\bm 0, 0)$ and $\bm \vph = \jump{D_{\bm u}^+\vph} = \bm 0$ and $\bm \psi = -\jump{\bm\sigma(V_{\bm u}^-\bm\psi, V_p^-\bm\psi)} = \bm 0$.
\end{proof}

Conventional boundary integral methods treat the integral operators as boundary or volume integrals with singular kernels and rely on numerical quadrature for their approximation. However, the potential functions $V, D, N$ provide a different yet equivalent interpretation of these operators through the interface problem~\eqref{eqn:equiv-ip}. It may seem that we have transformed a PDE interface problem into boundary integral equations only to reformulate it again as a PDE interface problem. However, the interface problem~\eqref{eqn:equiv-ip} is actually simpler than the original one, since it has constant coefficients and can therefore be solved efficiently and accurately with fast PDE solvers. In this way, we completely avoid the use of analytic Green's function expressions as integral kernels, making the method kernel-free.

\section{Numerical method}\label{sec4}

\subsection{Discretization of the boundary integral equation}
We mainly discuss the discretization of the boundary integral equation~\eqref{eqn:bie-1}, since the treatment of~\eqref{eqn:bie-2} is similar.
Suppose that the interface $\Gamma$ is a smooth closed curve, parametrized by $\bm X(s): \mathbb R/(2\pi\mathbb Z) \to \mathbb R^2$. Assume that the parameter $s$ is proportional to arclength. Let $M$ be a positive integer and let $\Delta s = 2\pi/M$. By uniformly partitioning the curve in the parameter $s$, we obtain a set of quasi-uniform interface points $\bm X_i = \bm X(s_i)$ for $i = 0,1,\cdots, M-1$. 
Let $L_i(s)$ be the interpolation basis function, such as periodic cubic splines or the Fourier basis.
Let $\bm \psi^M(s)$ be the approximation to the pullback $(\bm\psi \circ \bm X) (s)$ given in the form
\begin{equation}
    \bm \psi^M(s) = \sum_{i = 0}^{M-1} \bm\psi_i L_i(s),\quad L_i(s_j) =
    \begin{cases}
        1 & j = i\\
        0 & j \neq i.
    \end{cases},\quad
    \sum_{i = 0}^{M-1} L_i(s) \equiv 1.
\end{equation}
where $\bm \psi_i$ is the unknown nodal value of $\psi^M$ at the point $\bm X_i$. 
Denote by $\wt{\bm \psi}^M(\bm X(s)) = \bm \psi^M(s)$.
Let $\mc D'_h, \mc H_h, \mc N_{M,h}$ be the numerical approximation of $\mc D', \mc H, \mc N_N$ on a background Cartesian grid with discretization parameter $h$. 
The computation of $\mc D'_h, \mc H_h, \mc N_{N,h}$ is performed by solving the interface problem~\eqref{eqn:equiv-ip}. Once the interface problem \eqref{eqn:equiv-ip} is solved on a background Cartesian grid, one can apply a specialized interpolation scheme to obtain the integral values \eqref{eqn:sl-bi}-\eqref{eqn:vp-N} on the interface.

Then, let the boundary integral equation be satisfied at $\bm X_i$ for $i=0,1,\cdots,M-1$; the discrete scheme for \eqref{eqn:bie-1} can be written as
\begin{equation}
    \bm \psi^M (s_i) + 2 A_\mu (\mc D'_h\wt{\bm \psi}^M)(\bm X_i) = \frac{2}{\mu_+ +\mu_-} \bm g_2 (\bm X_i) + 2 A_\mu ((\mc H_h \bm g_1)(\bm X_i) + (\mc N_{N,h} \wt {\bm f})(\bm X_i)),
\end{equation}
and can be written in matrix form
\begin{equation}
    (\bm I + 2 A_\mu \bm K) \bm \psi = \bm b,
\end{equation}
where $\bm \psi$ now represents the vector consisting of the pointwise values $\bm \psi_i$ for $i=0,1,\cdots,M-1$.
The Krylov subspace methods are robust and efficient for the above linear systems. We will apply the Generalized Minimum Residual (GMRES) method to iteratively solve the system.
For linear systems arising from well-conditioned boundary integral equations, the discrete algebraic system mimics the so-called numerical well-conditionedness; the GMRES method converges efficiently even without sophisticated preconditioners. 
We also note that the iterative method can be implemented in a matrix-free manner. One only needs to implement the action of $\bm K$ on a given vector instead of assembling the full matrix, which is essentially composed of boundary/volume integral evaluations.

Solving the constant interface problem~\eqref{eqn:equiv-ip} is a main ingredient of the kernel-free boundary integral method. The main difficulty in solving such an interface problem is that the solution may have low regularity at the interface, which complicates the numerical approximation. To this end, we will introduce an efficient PDE solver for the problem, based on the correction function.

\subsection{Correction function method}
\subsubsection{Correction function revisit}
We first revisit the idea of the correction function for the simple 1D case, where $\Gamma = \{\gamma\}$ with $\gamma \in (-1,1)$ and $\Omega_\pm$ are two intervals, assumed to be $(\pm 1, \gamma)$.
Let $u$ be a piecewise continuous function defined on $\Omega_+\cup\Omega_-$.
Assume that $\at{u}{\Omega_\pm}\in C^{k}(\overline{\Omega_\pm})$. Since $u$ is not smooth at the point $\Gamma$, any standard finite difference taken across the interface will lead to a large error. 
For example, let $k=3$, $x\in\Omega_-$, and $x+h\in\Omega_+$, then
\begin{equation}
\begin{aligned}
    u(x) &= \sum_{m = 0}^k \frac{u^{(m)}(\gamma-)}{m!} (x - \gamma)^m + \mc{O}((x-\gamma)^{k}), \\ 
    u(x+h) &= \sum_{m = 0}^k \frac{u^{(m)}(\gamma+)}{m!} (x +h - \gamma)^m + \mc{O}((x+h-\gamma)^{k}).
\end{aligned}
\end{equation}
Assume that $x+\frac{h}{2} > \gamma$; by Taylor expansion at the interface point $\gamma$, the centered difference approximation of $u'\paren{x+\frac{h}{2}}$ yields
\begin{equation}\label{eqn:fd-1}
\begin{aligned}
    &\,\,\frac{u(x+h) - u(x)}{h} - u'\paren{x+\frac{h}{2}} \\
    &= \frac{1}{h} \sum_{m = 0}^3 \paren{ \frac{u^{(m)}(\gamma+)}{m!} (x + h - \gamma)^m -  
    \frac{u^{(m)}(\gamma-)}{m!} (x - \gamma)^m } \\
    &\quad - \sum_{m = 1}^3  \frac{u^{(m)}(\gamma+)}{(m-1)!} (x + \frac{h}{2} - \gamma)^m  + \mathcal{O}(h^{3}) \\
    & = \frac{1}{h}\paren{\jump{u}(\gamma) + (x-\gamma) \jump{u'}(\gamma) + \frac{(x-\gamma)^2}{2}\jump{u^{\prime\prime}}(\gamma)} + \mathcal{O}(h^2).
\end{aligned}
\end{equation}
We can see that the error is of order $\mathcal{O}(h^{-1})$ and does not converge.
A simple remedy is to incorporate the large error into the finite difference approximation as correction terms, such that the resulting error becomes $\mathcal{O}(h^2)$, which is the expected convergence rate. This is the idea of defect correction. It can be applied to PDE interface problems where the solution is only expected to be piecewise smooth, leading to Mayo's method and immersed interface methods.

Notice that although the expansion formula for $f(x), f(x+h), f\paren{x+\frac{h}{2}}$ at $\gamma$ is complicated, the resulting correction is actually simple and can be derived in a different way. Now, suppose we can extend $\at{u}{\Omega_+}$ to $[-1,1]$ such that the extended function, denoted by $u_+$, belongs to $C^{k}([-1,1])$. Also, we can extend $\at{u}{\Omega_-}$ to $u_-\in C^{k}([-1,1])$. Then $u$ can be represented as
\begin{equation}
    u(x) = u_+(x) \chi_{\Omega_+}(x) + u_-(x) \chi_{\Omega_-}(x), \quad x\in \Omega_+\cup\Omega_-,
\end{equation}
where $\chi_{\Omega_\pm}$ is the characteristic function on the set $\Omega_\pm$.
Then, the finite difference leads to
\begin{equation}\label{eqn:fd-2}
\begin{aligned}
    \frac{u(x+h) - u(x)}{h} - u'\paren{x+\frac{h}{2}} &= \frac{u_+(x)- u_-(x)}{h} + \frac{u_+(x+h) - u_+(x)}{h} \\
     - u_+'\paren{x+\frac{h}{2}} &= \frac{u_+(x)- u_-(x)}{h} + \mathcal{O}(h^2).
\end{aligned}
\end{equation}
Comparing this with \eqref{eqn:fd-1} and noting that
\begin{equation}\label{eqn:jc}
    \jump{u^{(m)}}(\gamma) = u_+^{(m)}(\gamma) - u_-^{(m)}(\gamma),\quad m=0,1,2,
\end{equation}
we can see the equivalence between these two since \eqref{eqn:fd-1} can be obtained by simply expanding the function $u_+ - u_-$ at the point $\gamma$.
Denote by $\wh u = u_+ - u_-$ the correction function. Then, as long as one can determine the correction function, one can easily obtain a modified finite difference approximation for piecewise continuous functions. Also, since the finite difference is usually taken for small $h$, and the modification is only needed at points near the interface, seeking a $\wh u$ locally in the vicinity of $\Gamma$ with a distance $\mathcal{O}(h)$ is sufficient. This locality reduces the difficulty of extending a function. Inspired by \eqref{eqn:fd-1}, the simplest choice is to represent $\wh u$ as a polynomial
\begin{equation}
    \wh u(x) = \jump{u}(\gamma) + (x-\gamma) \jump{u'}(\gamma) + \frac{(x-\gamma)^2}{2}\jump{u^{\prime\prime}}(\gamma),
\end{equation}
which also satisfies the jump conditions \eqref{eqn:jc}.

\subsubsection{Local Cauchy problem}
In order to solve the interface problem~\eqref{eqn:equiv-ip} with a finite difference scheme, we first seek a correction function to represent the correction needed. 
For convenience, we assume that the interface $\Gamma$ and the data functions $\at{\bm F}{\Omega_\pm}, \bm \Phi, \bm \Psi$ belong to $C^{\infty}$ so that the solution $(\at{\bm v}{\Omega_\pm}, \at{q}{\Omega_\pm})$ can be extended to smooth functions $(\bm v^\pm, q^\pm)$ in a narrow band $\Omega_\Gamma\subset\mathcal{B}$ close to $\Gamma$. Denote the correction function by $(\wh{\bm v}, \wh q)$.
Since finite difference schemes usually have local stencils whose diameter is of order $\mc O(h)$, where $h$ is the grid spacing in $\mathcal{B}$, the width of the narrow band only needs to satisfy
\begin{equation}
    \sup_{\bm x\in\Omega_\Gamma}\text{dist}(\bm x, \Gamma) \leq C h,
\end{equation}
where the constant $C$ depends only on the finite difference scheme and the shape of $\Gamma$.
Assume that the piecewise function $\at{\bm F}{\Omega_\pm}$ can also be smoothly extended to smooth functions $\bm F^\pm\in C^\infty(\Omega_\Gamma)$. Denote by $\wh {\bm F} = \bm F^+ - \bm F^-$. 
If the extension of the solution is defined with the Cauchy problem of the Brinkman equation given the Cauchy data on $\Gamma$ and right-hand sides $\bm F^\pm$, then the correction function $(\wh{\bm v}, \wh q)$ satisfies the two-sided Cauchy problem as follows
\begin{subequations}\label{eqn:cau-pro}
\begin{align}
    \Delta  \wh{\bm v} - c^2 \wh{\bm v} - \grad \wh q &= \wh{\bm F}, \quad \grad \cdot \wh{\bm v} = 0,\quad \text{ in }\Omega_\Gamma, \\
    \wh{\bm v} &= \bm \Phi, \quad \wh{\bm \sigma} \bm n = \bm \Psi, \quad \text{ on }\Gamma.
\end{align}    
\end{subequations}
It is worth mentioning that the above Cauchy problem is not well posed in the sense of Hadamard. For a flat interface, by applying the Fourier transform, we see that the growth of the solution is exponential, $\mathcal{O}(\varepsilon e^{|k|d})$, where $\varepsilon$ and $k$ are the amplitude and frequency of the initial data on $\Gamma$, respectively, and $d$ is the distance from $\Gamma$. This implies that small perturbations in the data, namely the initial conditions $\bm \Phi$ and $\bm \Psi$, can lead to unbounded growth away from $\Gamma$. However, for the purpose of numerically solving an interface problem with a local stencil, we solve the problem only in $\Omega_\Gamma$, so that $d = \mc O (h)$. In addition, if the initial data has compactly supported Fourier coefficients, then the error can be bounded in $\Omega_\Gamma$. This is usually the case in our method, since the data functions we use are polynomial interpolants based on finitely many points on $\Gamma$, which admit a frequency-cutoff property. For example, suppose the mesh size on $\Gamma$ is $\Delta s$; Fourier interpolation using values at mesh nodes has a maximum frequency of order $\mc O((\Delta s)^{-1})$. As long as one sets the background and interface grid spacings so that $h \le C \Delta s$ with a constant $C$, then it holds that
\begin{equation}
    e^{|k|d} \leq  e^{\frac{h}{\Delta s}} \leq e^C.
\end{equation}
Hence, the error is bounded.
We will introduce the numerical method for solving the local Cauchy problem in the next section.

\subsection{Corrected MAC scheme}
In the absence of interfaces, problem \eqref{eqn:equiv-ip} can be solved with the Marker-and-Cell (MAC) finite difference scheme on a staggered grid.
However, the standard MAC scheme leads to large local truncation errors near the interface due to the non-smoothness of the solution at the interface.
We correct the MAC scheme by incorporating the local truncation errors utilizing the correction function.
For simplicity, we assume that the rectangular domain $\mathcal{B}$ is a unit square $(0,1)^2$.

First, the domain $\mathcal{B}$ is uniformly partitioned into a Cartesian grid in each spatial direction. The grid nodes are defined as $(x_i, y_j)$ for $i=0,1,\cdots, N_x$, $j=0,1,\cdots, N_y$ with $N_x = N_y$ and $x_{i+1}-x_i = y_{j+1}-y_j = h$. 
Define the centered nodes $x_{i-\frac{1}{2}} = x_i - \frac{1}{2}h$, $y_{j-\frac{1}{2}} = y_j - \frac{1}{2}h$ for $i, j = 1, 2, \cdots, N$.
The discrete nodes for the unknown variables $\bm v = (v^{(1)}, v^{(2)})$ and $q$ are defined at different positions. 
We define the grid node sets $\mathcal{T}^1_h, \mathcal{T}^2_h,\mathcal{T}^3_h$ for the discrete approximation of the variables $v^{(1)}, v^{(2)}, p$, respectively,
\begin{align}
    &\mathcal{T}^1_h = \{(x_i, y_{j-\frac{1}{2}}), \quad i=1,\cdots, N-1, \quad j = 1,2,\cdots,N \}, \\
    &\mathcal{T}^2_h = \{(x_{i-\frac{1}{2}}, y_j), \quad j=1,\cdots, N-1, \quad i = 1,2,\cdots,N \},\\
    &\mathcal{T}^3_h = \{(x_{i-\frac{1}{2}}, y_{j-\frac{1}{2}}), \quad i,j = 1,2,\cdots,N \},
\end{align}
We can also define interior and exterior grid nodes by
$
    \mc T_h^{d, \pm} = \mc T_h^d\cap \Omega_\pm .
$

Define the numerical solution $(\bm v_h, q_h)(x_i,y_j) = (\bm v_{i,j}, q_{i,j})$ where $v^{(1)}_{i,j}$ is defined as the numerical approximation of $v^{(1)}$ at the grid node $(x_i, y_{j-\frac{1}{2}})\in\mathcal{T}^1_h$. 
Similarly, we define $v^{(2)}_{i,j}, q_{i,j}$ as numerical approximations to $v^{(2)}, q$ at grid nodes in the grids $\mathcal{T}^2_h$, $\mathcal{T}^3_h$, respectively.
Let $f_{i,j}^{(d)} = f^{(d)}(x_i, y_j)$ for $d=1,2$ and $f_{i, j}^{(3)} = 0$.
For a grid function $v_{h}$, define the first order finite difference operators $\delta_{h,d}^\pm$ for $d=1,2$ by 
\begin{equation}
\begin{aligned}
    \delta_{h,1}^+ v_{i,j} = (v_{i+1,j}-v_{i,j})/h, \quad \delta_{h,1}^- v_{i,j} = (v_{i,j}-v_{i-1,j})/h, \\
    \delta_{h,2}^+ v_{i,j} = (v_{i,j+1}-v_{i,j})/h, \quad \delta_{h,2}^- v_{i,j} = (v_{i,j}-v_{i,j-1})/h,
\end{aligned}
\end{equation}
Define the difference operators $L_h^d$ as follows,
\begin{align}
    L_h^1[\bm v_h, q_h](x_i,y_j) &= \left( \sum_{d=1}^2 \delta_{h,d}^+ \delta_{h,d}^- - c^2 \right) v^{(1)}_{i,j} - \delta_{h,1}^+ q_{i,j}, \\
    L_h^2[\bm v_h, q_h](x_i,y_j) &= \left( \sum_{d=1}^2 \delta_{h,d}^+ \delta_{h,d}^- - c^2 \right) v^{(2)}_{i,j} - \delta_{h,2}^+ q_{i,j}, \\
    L_h^3[\bm v_h, q_h](x_i,y_j) &= \delta_{h,1}^- v^{(1)}_{i,j} + \delta_{h,2}^- v^{(2)}_{i,j}.
\end{align}
The MAC scheme is given by
\begin{equation}
    L_h^d[\bm v_h, q_h](x_i,y_j) = f_{i,j}^{(d)}, \quad \quad  (x_i,y_j)\in \mc T_h^d, d= 1,2,3,
\end{equation}
together with the symmetric treatment for the no-slip boundary condition on $\partial\mathcal{B}$ .
Denote by $\mc S_h(\bm x)\subset \cup_{d=1}^3 \mc T_h^d$ the set of stencil nodes of the finite difference scheme at the grid node $\bm x \in \mc T_h^d$ for $d=1,2,3$.
One can define irregular grid nodes $\mc I_h^d$ as the set of grid nodes at which the finite difference stencil intersects the interface,
\begin{equation}
    \mc I_h^d = \{\bm x \in \mc T_h^d: (\mc S_h(\bm x)\cap \Omega_+)\cup (\mc S_h(\bm x)\cap \Omega_-) \neq \emptyset\}, \quad d=1,2,3.
\end{equation}
Then, the regular grid nodes will be denoted as $\mc T_h^d\setminus\mc I_h^d$.
As regular grid nodes, the finite difference is taken on a smooth function, and the scheme achieves the expected accuracy. However, at irregular grid nodes, corrections are needed.

At a point $\bm x \in \Omega_\pm$, we can write the exact solution as
\begin{equation}
    \bm v(\bm x) = \bm v^\pm(\bm x) \mp  \wh{\bm v}(\bm x) \chi_{\Omega_\mp}(\bm x), \quad  q(\bm x) =  q^\pm(\bm x) \mp  \wh q(\bm x)\chi_{\Omega_\mp}(\bm x).
\end{equation}
Assume that $\bm x\in\Omega_\pm \cap \mc T_h^1$.
Plugging the exact solution $(\bm v, q)$ into the MAC scheme, 
\begin{align}
    \wt E_h^d(\bm x) = E_h^d(\bm x) \mp  \chi_{ \mc I_h^d} L_h^d[\wh{\bm v}\chi_{\Omega_\mp}, \wh q\chi_{\Omega_\mp}](\bm x), \quad\bm x\in\Omega_\pm \cap \mc T_h^1,
\end{align}
where $E_h^d(\bm x) = \mc O(h^2), d=1,2,3$ is the small local truncation error that will not affect the accuracy, and the remaining term is large and should be incorporated into the scheme as a correction term.
Let $(\wh{\bm v}_h, \wh{q}_h)$ be the numerical approximation of $(\wh{\bm v}, \wh q)$; we write the corrected MAC scheme as
\begin{align}
    &L_h^d[\bm v_h, q_h](x_i,y_j) = f_{i,j}^{(d)}  \mp  \chi_{  \mc I_h^d} L_h^d[\wh{\bm v}_h\chi_{\Omega_\mp}, \wh q_h\chi_{\Omega_\mp}](x_i,y_j) \\
    &=f_{i,j}^{(d)}  - \chi_{ \mc I_h^d\cap\Omega_+} L_h^d[\wh{\bm v}_h\chi_{\Omega_-}, \wh q_h\chi_{\Omega_-}](x_i,y_j) +  \chi_{ \Omega_-\cap \mc I_h^d} L_h^d[\wh{\bm v}_h\chi_{\Omega_+}, \wh q_h\chi_{\Omega_+}](x_i,y_j) .
\end{align}
The local truncation error of the corrected scheme is the sum of $E_h^d$ and the error due to the numerical approximation of the correction functions. As long as $(\wh{\bm v}_h, \wh{q}_h)$ is sufficiently accurate such that the introduced local truncation error at irregular grid nodes is one order lower than that at regular grid nodes, one can expect that the scheme can retain its convergence rate, since $\Gamma$ is a codimension one object and irregular grid nodes are far fewer~\cite{Beale2006,Dong2023}.

By solving the corrected MAC scheme, one obtains the grid function $(\bm v_h, q_h)$, with which we can extract the integrals by interpolating the limiting values of the potential function at the interface; see~\cite{Ying2007,ZhouYing2024Correction} for details. 

\subsection{Geometric multigrid solver}
In matrix form, the corrected MAC scheme can be written as $\mathbf{L}_h\mathbf{v}_h = \mathbf{b}$, where 
\begin{equation}
    \mathbf{L}_h = 
    \begin{bmatrix}
        \mathbf{A}_h - \kappa\mathbf{I}_h & \mathbf{B}^T_h\\
        \mathbf{B}_h & \mathbf{0}
    \end{bmatrix},\quad 
    \mathbf{v}_h = 
    \begin{bmatrix}
        \mathbf{u}_h\\
        \bm p_h
    \end{bmatrix}, \quad 
    \mathbf{b}_h = 
    \begin{bmatrix}
        \mathbf{f}_h\\
        \bm g_h
    \end{bmatrix}.
\end{equation}
Due to the corrections, the right-hand side $\bm g_h$ is generally nonzero and must be adjusted by subtracting a constant so that $\bm g_h$ has zero mean, ensuring that the right-hand side lies in the range of $\bm L_h$. 
The resulting saddle-point system is solved using a geometric multigrid method with a distributive Gauss-Seidel smoother.
For a diagonally-dominant matrix $\mathbf{A} = \mathbf{D}-\mathbf{L}-\mathbf{U}$ with diagonal part $\mathbf{D}$, lower triangular part $\mathbf{L}$, and upper triangular part $\mathbf{U}$, the Gauss-Seidel approximation to the inverse $\mathbf{A}^{-1}$ is defined as $(\mathbf{A}^{-1})_{GS}=(\mathbf{D}-\mathbf{L})^{-1}$.
Thus, the distributive Gauss-Seidel method is given as
\begin{equation}
    \mathbf{v}_h^{m+1} = 
    \mathbf{v}_h^{m} + (\mathbf{L}_h^{-1})_{DGS} \left (
    \mathbf{b}_h -
    \mathbf{L}_h
    \mathbf{v}_h^m
    \right ),
\end{equation}
where $(\mathbf{L}_h^{-1})_{DGS}$ is the distributive Gauss-Seidel approximation to $\mathbf{L}_h^{-1}$, given by
\begin{equation}
    (\mathbf{L}_h^{-1})_{DGS} = 
    \begin{bmatrix}
        \mathbf{I}_h & \mathbf{B}_h^T \\
        \mathbf{0} & \kappa\mathbf{I}_h-\mathbf{B}_h\mathbf{B}_h^T
    \end{bmatrix}
    \begin{bmatrix}
        ((\mathbf{A}_h - \kappa\mathbf{I}_h)^{-1})_{GS} & \mathbf{0}\\
        -(\mathbf{D}_h^{-1})_{GS}\mathbf{B}_h((\mathbf{A}_h - \kappa\mathbf{I}_h)^{-1})_{GS} & (\mathbf{D}_h^{-1})_{GS}
    \end{bmatrix}.
\end{equation}
The prolongation operator $\mathcal{I}_{2h}^h$ for transferring a coarse-grid function to a fine-grid function is defined as bi-/trilinear interpolation for the velocity components and piecewise constant interpolation for the pressure.
The restriction operator $\mathcal{I}_{h}^{2h}$ for transferring a fine grid function to a coarse grid function is defined as the transpose of the prolongation operator, i.e., $\mathcal{I}_{h}^{2h} = (\mathcal{I}_{2h}^h)^T$.
As indicated in~\cite{trottenberg2001multigrid}, the mean-zero constraint can be imposed on the coarsest grid and will not affect convergence of the iteration.
To enforce the mean-zero constraint on $p_h$, we subtract its mean value after solving the linear system.

\section{Numerical method for local Cauchy problem}\label{sec5}
\subsection{Partition of unity collocation method}
To solve the local Cauchy problem in the narrow band $\Omega_\Gamma$, we use a partition of unity to represent the numerical solution.
Let $\mc O_i$ be balls centered at an interface point $\bm X_i$ with a radius $R$. For sufficiently large $R$ (normally of order $\mc O(h)$ where $h$ is the bulk grid spacing) such that $\{\mc O_i\}_{i=0}^{M-1}$ form an overlapping cover of $\Omega_\Gamma \subset \cup_{i=0}^{M-1}\mc O_i$.
Let $\rho_i(\bm x)$ be a partition of unity function subordinate to the set $\mc O_i$ for $i=0,1,\cdots, M-1$ that satisfies
\begin{equation}
    \text{supp }\rho_i \subset \overline{\mc O_i}, \quad \sum_{i=0}^{M-1} \rho_i (\bm x) \equiv 1,\quad \bm x\in\Omega_\Gamma.
\end{equation}
We represent any function $f$ defined on $\Omega_\Gamma$ by
\begin{equation}
     f(\bm x) = \sum_{i = 0}^{M - 1} \rho_i(\bm x) f_i(\bm x),
\end{equation}
where $f_i$ is the local function with compact support in $\overline{\mc O_i}$.
Note that we do not specify any smoothness on the weight function of the partition of unity, as the correction function is only used to modify the local truncation error. In practice, a simple and discontinuous $\rho_i$ is given by
\begin{equation}
    \rho_i(\bm x) = 
    \begin{cases}
        1 & \text{ if } i = \arg\min_j\norm{\bm x - \bm X_j}, \\
        0 & \text{ otherwise}.
    \end{cases}
\end{equation}
Once we can determine the numerical solution of the local Cauchy problem in each $\mc O_i$, we can use the partition of unity function to patch together the local solutions.

Let $(\wh{\bm v}_h, \wh{q}_h)$ be the numerical solution to the local Cauchy problem, expressed as
\begin{equation}
    \wh{\bm v}_h(\bm x) = \sum_{i=0}^{M-1} \rho_i(\bm x) \wh{\bm v}_{h, i}(\bm x), \quad \wh{q}_h(\bm x) = \sum_{i=0}^{M-1} \rho_i(\bm x) \wh{q}_{h, i}(\bm x),
\end{equation}
where $(\wh{\bm v}_{h,i}, \wh{q}_{h,i})$ is the local solution in the ball $\mc O_i$. Treating the normal direction as the ``time'' direction, we numerically approximate the local Cauchy problem~\eqref{eqn:cau-pro} with an explicit time-stepping method, leading to decoupled subproblems for each local solution $(\wh{\bm v}_{h,i}, \wh{q}_{h,i})$, which serve as numerical solutions to the following localized problem in the small domain $\mc O_i$:
\begin{subequations}
\begin{align}
    \Delta  \wh{\bm v}_i - c^2 \wh{\bm v}_i - \grad \wh q_i &= \wh{\bm F},\quad\grad \cdot \wh{\bm v}_i = 0,\quad \text{ in }\mc O_i, \\
    \wh{\bm v}_i &= \bm \Phi, \quad  \wh{\bm \sigma}_i \bm n = \bm \Psi, \quad \text{ on }\Gamma\cap\mc O_i.
\end{align}    
\end{subequations}

Assume that each component of $(\wh{\bm v}_{h,i} \text{ and } \wh{q}_{h,i})$ is represented as linear combinations of multivariate polynomials 
\begin{equation}
    \wh{\bm v}_{h,i}(\bm x) = \sum_{j = 1}^{6} \bm c_{ij} \phi_j((\bm x - \bm X_i)/R), \quad \wh{q}_{h,i}(x, y) = \sum_{j=1}^{3}\frac{ d_{ij}}{R} \phi_j((\bm x - \bm X_i)/R),
\end{equation}
where $\phi_j$ is chosen as a polynomial of degree $\leq m$,
\begin{equation}
    \phi_j(\bm x) \in\Pi_m = \left\{\sum_{|\alpha|\leq m }b_\alpha \bm x^\alpha \right\},\quad\bm x^\alpha = x_1^{\alpha_1} x_2^{\alpha_2}, \quad \alpha = (\alpha_1, \alpha_2).
\end{equation}
The simplest way is to choose $\phi_j$ as the monomial basis $\bm x^\alpha$, so that
\begin{equation}
\{\phi_j(x,y)\} = \{1, x, y, x^2, y^2, xy\}.
\end{equation}
Since in the problem \eqref{eqn:cau-pro}, the velocity has one more derivative than the pressure, we set $\wh{\bm v}_{h,i}$ as a polynomial of degree $2$ and $\wh{q}_{h,i}$ as a polynomial of degree $1$, similar to the well-known Taylor-Hood element for incompressible Stokes equations used in the finite element framework. Since 
\begin{equation}
    \text{dim }\Pi_m = \frac{1}{2}(m+1)(m+2),
\end{equation}
we have $M_u = 6$ and $M_p = 3$.

The coefficients $(\bm c_{ij}, d_{ij})$ can be determined by collocation.
Let $ \{\bm q^{(1)}_k\}_{k=1}^{K^{(1)}}\subset \mc O_i\cap\Gamma$ be distinct collocation points for the Dirichlet boundary condition; let $\{\bm q^{(2)}_k\}_{k=1}^{K^{(2)}}\subset \mc O_i\cap\Gamma$ be distinct collocation points for the Neumann boundary condition; and let $\{\bm q^{(3)}_k\}_{k=1}^{K^{(3)}}$ and $\{\bm q^{(4)}_k\}_{k=1}^{K^{(4)}}\subset \mc O_i$ be distinct collocation points for the momentum equation and the divergence-free condition, respectively. We arrive at the following collocation problem
\begin{subequations}
    \begin{align}
        \sum_j \phi_j(\bm q_k^{(1)} / R) \bm c_j &= \bm \Phi(\bm q_k^{(1)}),\\
        \sum_j \paren{\bm c_j\otimes\grad \phi_j + \grad\phi_j \otimes \bm c_j - d_j\phi_j \bm I}(\bm q_k^{(2)}/R)\bm n(\bm q_k^{(2)}) &= R\bm\Psi(\bm q_k^{(2)}),\\
        \sum_j\paren{(\Delta-c^2)\phi_j \bm c_j - \grad\phi_j d_j}(\bm q^{(3)}/R) &= R^2 \bm F(\bm q^{(3)}),\\
        \sum_j\paren{\partial_x \phi_j c_j^{(1)} + \partial_y \phi_j c_j^{(2)}}(\bm q_k^{(4)}/R) & = 0.
    \end{align}
\end{subequations}
We abbreviate it in matrix form
\begin{equation}\label{eqn:Mcb}
    \bm M \bm c = \bm b,
\end{equation}
where $\bm c \in \R ^{15}$ consists of the unknowns $\bm c_{ij}, d_{ij}$.

The selection of collocation points is essential for the performance of the collocation. 
A first attempt is to choose more collocation points than needed (more than the number of unknowns) so that $\bm M$ has full rank and then solve the resulting linear system in the least squares sense. In this way, since each equation is not satisfied exactly, it is useful to consider a weighted least squares method to balance the accuracy for different collocation equations. 
Noticing that the PDE and boundary conditions involve derivatives of different orders, the spatial parameter $R$ provides a natural choice for the weight so that the matrix $\bm M$ is not affected by $R$. 
We multiply both sides of the collocation equations by a power of $R$ based on the spatial scaling of the equation, which is essentially related to the order of the derivatives involved. This serves as a diagonal preconditioner that can significantly reduce the effect of the parameter $R$ on the condition number of $\bm M$, which is around $10^2\sim 10^3$ regardless of how small $R$ is.
In fact, if $\Gamma$ is a straight line, then the matrix $\bm M$ is independent of $R$.


\subsection{Minimal choice for collocation points}
The least-squares method for solving the collocation problem is robust; however, it typically requires more collocation points than necessary to ensure that the system has full rank, which increases the computational overhead. To optimize the selection of collocation points, we propose a strategy that uses the minimum number of points so that the resulting linear system is square and solvable.

For convenience, we suppose that, under translation and rotation, the local parameterization of $\Gamma\cap\mc O_i$ is given by $\bm X(0) = \bm X_i = \bm 0$ and $\partial_s \bm X(0) = (1,0)^T$, where $s$ is the arc-length parameter. 
For each $d=1,2,3$, let $\eta_k^{(d)}$ be distinct constants independent of $R$ such that $\bm X(\eta_k^{(d)}R)\in\Gamma\cap\mc O_i$.
Let $\bm q_k^{(1)} = \bm X(\eta_k^{(1)} R)$ for $k = 1,2,3$ be distinct collocation points for the Dirichlet boundary condition. 
Let $\bm q_k^{(2)} = \bm X(\eta_k^{(2)} R)$ for $k = 1,2$ be distinct collocation points for the Neumann boundary condition. 
Let $\bm q^{(3)} \in \Gamma\cap\mc O_i$ be a collocation point for the Brinkman equation and let $\bm q_k^{(4)} \in \Gamma\cap\mc O_i$ for $k=1,2,3$ be collocation points for the divergence free condition. We choose $\bm q_k^{(4)}$ so that they are not on the same line.
A simple count shows that the number of collocation points is equal to the number of unknowns, both $15$.
Hence, the solvability of the linear system~\eqref{eqn:Mcb} is equivalent to the uniqueness due to the Fredholm alternative.
We now analyze the solvability of the system~\eqref{eqn:Mcb}.

We start by discussing the case where $\Gamma\cap \mc O_i$ is locally a straight line:
\begin{lemma}
    Let $\Gamma\cap \mc O_i$ be a straight line segment. Then $\bm M$ is independent of $R$ and is invertible.
\end{lemma}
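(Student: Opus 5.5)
Throughout, $\bm M$ denotes the collocation matrix in~\eqref{eqn:Mcb}. I will work in the rescaled local variables $(t,s) = \bm x/R$, after the translation and rotation fixed above, so that $\Gamma\cap\mc O_i$ lies on $\{s=0\}$ and $\bm n = (0,1)^T$ is constant. Then the Dirichlet, Neumann and momentum collocation points become $(\eta_k^{(d)},0)$, which are independent of $R$. I will read the divergence points as $\bm q_k^{(4)} = R\,\bm\xi_k$ with fixed, non-collinear $\bm\xi_k\in\mc O_i/R$. (If they literally lay on the straight segment $\Gamma\cap\mc O_i$ they would be collinear, contradicting the stated choice, so I take them in $\mc O_i$ as in the general collocation setup.)

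\emph{Independence of $R$.} I will check each row of~\eqref{eqn:Mcb}. Every entry of $\bm M$ is of the form $\phi_j$, $\grad\phi_j$ or $(\Delta - c^2)\phi_j$ evaluated at a rescaled point. The exception is the $c^2$ term: after the scaling $R^2$ it appears as $R^2c^2\phi_j$, and it is multiplied by $\phi_j(\bm q^{(3)}/R)$. Because $\bm q^{(3)}$ lies on $\{s=0\}$, the affected entries are $R^2c^2\phi_j(\eta^{(3)},0)$. I will have to verify this entry carefully. The entries that matter are those for the velocity basis functions $1$, $t$ and $t^2$. In the uniqueness argument below, the Dirichlet rows force the velocity to vanish on $s=0$, so these entries drop out. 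This suggests either that the paper's scaling absorbs $c^2$ appropriately, or that invertibility holds uniformly anyway. I will state the independence claim modulo this check, noting that the geometric data (points, normal) are $R$-free.

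\emph{Invertibility.} Since $\bm M$ is square ($15\times15$), it suffices to show that $\bm M\bm c = \bm 0$ forces $\bm c = \bm 0$. Let $\bm v$ be the quadratic velocity and $q$ the linear pressure. The argument proceeds row by row.

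\begin{enumerate}
\item Dirichlet rows. Each $v^{(l)}(t,0)$ is a quadratic in $t$ vanishing at three distinct $\eta_k^{(1)}$. Hence it vanishes identically, and $\bm v = s\,\bm w$ with $\bm w$ linear.
\item Neumann rows. On $s=0$ we have $\partial_t v^{(2)} = 0$ and $\partial_s\bm v = \bm w(t,0)$. The Neumann rows therefore give $w^{(1)}(t,0) = 0$ and $2w^{(2)}(t,0) - q(t,0) = 0$ at two distinct points. Both are linear in $t$, so they hold identically. Thus $v^{(1)} = a s^2$ and $q(t,0) = 2w^{(2)}(t,0)$.
\item Divergence rows. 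Then $\grad\cdot\bm v = \partial_s\big(s\,w^{(2)}\big) = w^{(2)} + s\,\partial_s w^{(2)}$. Writing $w^{(2)} = \alpha + \beta t + \gamma s$, the divergence equals $\alpha + \beta t + 2\gamma s$, a linear function. It vanishes at three non-collinear points, so $\alpha = \beta = \gamma = 0$, i.e.\ $w^{(2)}\equiv 0$ and $v^{(2)}\equiv 0$.
\item Consequence for $q$. Now $q(t,0)\equiv 0$, so $q = d\,s$.
\item Momentum row at $(\eta^{(3)},0)$. The first component gives $2a - c^2 a s^2 - \partial_t q = 2a = 0$. The second gives $-\partial_s q = -d = 0$.
\end{enumerate}

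Hence $\bm v = \bm 0$ and $q = 0$. Since $\{\phi_j\}$ is a basis, $\bm c = \bm 0$.

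The main obstacle is bookkeeping rather than ideas. It consists of getting the Neumann traction components right with the normal $(0,1)$, confirming that the divergence row really yields $\alpha + \beta t + 2\gamma s$, and verifying the $R$-independence of the $c^2$ entries in the momentum row.
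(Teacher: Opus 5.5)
Your uniqueness argument is correct. It follows the same overall scheme as the paper: pass to $\bm M\bm c=\bm 0$, use the Dirichlet Vandermonde to get $\bm c_1=\bm c_2=\bm c_4=\bm 0$, then use up the Neumann, divergence and momentum rows. The difference is the order in which you use the rows, and that order buys generality.

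The paper fixes one configuration: two divergence points on the segment, the third at $(0,\eta_3^{(4)})$, and the momentum point at the origin. It proceeds in three steps.
\begin{enumerate}
\item The two on-line divergence rows, where $\partial_t v^{(1)}$ vanishes, give $c_3^{(2)}=c_6^{(2)}=0$.
\item The Neumann rows then give $c_3^{(1)}=c_6^{(1)}=0$ and $d_1=d_2=0$.
\item The momentum row together with the off-line divergence row give $\bm c_5=\bm 0$ and $d_3=0$.
\end{enumerate}
You use the tangential traction first. This removes $c_3^{(1)}$ and $c_6^{(1)}$, hence $\partial_t v^{(1)}\equiv 0$. The divergence then collapses to the affine function $\alpha+\beta t+2\gamma s$ in exactly three unknowns. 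As a result, any non-collinear triple of divergence points works, and the momentum point may lie anywhere on the segment. This matches the only hypothesis stated before the lemma, namely non-collinearity. It also avoids the paper's inconsistent requirement that the $\bm q_k^{(4)}$ lie on $\Gamma\cap\mc O_i$ and yet be non-collinear. The paper's proof, by contrast, is an explicit coefficient computation valid for one specific placement.

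You should not leave the independence of $R$ conditional. Take $\bm M$ as the paper writes it, with momentum rows built from $(\Delta-c^2)\phi_j$. Then every entry is a fixed polynomial or derivative, evaluated at the $R$-free points $\bm q/R$ against the constant normal. So independence is immediate, and that is all the paper's proof asserts.

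Your remark that a dimensionally consistent $R^2$-scaling produces $R^2c^2\phi_j$ is correct, and it exposes an imprecision in the paper. Under that scaling, the entries $-R^2c^2\phi_j(\bm q^{(3)}/R)$ depend on $R$ whenever $c\neq 0$, so literal independence fails. However, your uniqueness argument never uses the value of that coefficient: the velocity vanishes at $\bm q^{(3)}$ after the Dirichlet step. Hence $\bm M$ is invertible for every value of it. The dependence is also a continuous $O(R^2)$ perturbation confined to those entries. So the uniform bound on the inverse needed in the subsequent proposition is unaffected.
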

\begin{proof}
We can write $\Gamma\cap \mc O_i$ as
\begin{equation}
    \Gamma\cap \mc O_i = \{(x,y):x\in (-R, R), y = 0\}.
\end{equation}
Then, the collocation points are given by $(\eta_k^{(1)} R, 0)\in \Gamma\cap \mc O_i$ with $k = 1,2,3$ for the Dirichlet boundary condition, $(\eta_k^{(2)} R, 0)\in \Gamma\cap \mc O_i$ with $k = 1,2$ for the Neumann boundary condition. The collocation point for the Brinkman equation is chosen as $(0, 0)\in \mc O_i$ and the divergence free condition $(\eta_1^{(4)}, 0), (\eta_2^{(4)}, 0), (0, \eta_3^{(4)})$ with $\eta_3^{(4)}\neq 0$.

To show that $\bm M$ is invertible, we only need to demonstrate that the homogeneous system $\bm M \bm c = \bm 0$ has only a zero solution.
The Dirichlet boundary condition simplifies to
\begin{equation}
    \bm c_1 + \eta_k^{(1)} \bm c_2 +  (\eta_k^{(1)} )^2\bm c_4 = \bm 0, \quad k = 1,2,3.
\end{equation}
Since $\eta_k^{(1)}$ are distinct, this is equivalent to the polynomial interpolation in 1D and the coefficient matrix is the Vandermonde matrix, which is invertible. Then, we have $\bm c_1 = \bm c_2 = \bm c_4 = \bm 0$. 
The collocation equation for the Neumann boundary conditions and the equation for the divergence-free condition at $\bm q_1^{(4)}, \bm q_2^{(4)}$ lead to
\begin{equation}
    \begin{aligned}
        \bm c_3 + \eta_k^{(2)}\bm c_6 + c_3^{(2)} 
        \begin{bmatrix}
            0\\
            1
        \end{bmatrix}
        + c_6^{(2)}
        \begin{bmatrix}
            0\\
            \eta_k^{(2)}
        \end{bmatrix}
        - d_1
        \begin{bmatrix}
            0\\
            1
        \end{bmatrix}
        -d_2
        \begin{bmatrix}
            0\\
            \eta_k^{(2)}
        \end{bmatrix} &= \bm 0, \\
    c_3^{(2)} + \eta_k^{(2)} c_6^{(2)} &= 0.
    \end{aligned}
\end{equation}
Since $\eta_1^{(2)} \neq \eta_2^{(2)}$ and $\eta_1^{(4)} \neq \eta_2^{(4)}$, we obtain
\begin{equation}
    \bm c_3 = \bm c_4=\bm 0,\quad d_1 = d_2 = 0.
\end{equation}
Considering the equation at $\bm q^{(3)}$ and $\bm q_3^{(4)}$, we have
\begin{equation}
\begin{aligned}
    2\bm c_5 - d_3 
    \begin{bmatrix}
        0 \\
        1
    \end{bmatrix} = \bm 0,\quad
    2\eta_3^{(4)} c_5^{(2)} = 0.    
\end{aligned}
\end{equation}
Since $\eta_3^{(4)}\neq 0$. We conclude $\bm c_5 = \bm 0$ and $d_3 = 0$. 
Hence, we have zero solution $\bm c =\bm 0$.
The above calculation also shows that $\bm M$ is independent of $R$.
\end{proof}

Now we consider the curved-interface case, in which the collocation points for the boundary conditions deviate from the line $y=0$ and therefore differ from those in the straight-line case. The collocation points for the PDEs can still be chosen to be the same. Therefore, only the entries in the coefficient matrix $\bm M$ related to the boundary conditions change.

\begin{proposition}
    Let $\Gamma$ be a $C^2$ curve. There exists $R_0 > 0$ such that for all $R \leq R_0$, $\bm M$ is invertible and the inverse $\bm M^{-1}$ is uniformly bounded in $R$.
\end{proposition}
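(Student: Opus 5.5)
The plan is to treat $\bm M = \bm M(R)$ as a perturbation of the straight-line matrix $\bm M_0$ from the preceding Lemma, and to show that $\bm M(R) \to \bm M_0$ as $R \to 0$ with $\norm{\bm M(R) - \bm M_0} = \mc O(R)$. A standard Neumann-series argument then gives the result.

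First, use the local arc-length parameterization with $\bm X(0)=\bm 0$ and $\partial_s\bm X(0)=(1,0)^T$. Since $\Gamma$ is $C^2$, Taylor's theorem gives
\begin{equation*}
\bm X(\eta R) = (\eta R, 0)^T + \mc O(R^2), \qquad \bm n(\bm X(\eta R)) = \bm n(\bm 0) + \mc O(R),
\end{equation*}
with constants depending only on the curvature bound and on $\max|\eta_k^{(d)}|$. Rescaling the collocation points gives $\bm X(\eta_k^{(d)}R)/R = (\eta_k^{(d)},0)^T + \mc O(R)$.

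Next, examine the entries of $\bm M(R)$. They are values of the fixed polynomials $\phi_j$, $\grad\phi_j$, $\Delta\phi_j$ evaluated at the scaled points $\bm q/R$, multiplied in the Neumann rows by components of $\bm n(\bm q)$.
\begin{itemize}
\item The Brinkman and divergence rows use points that can be chosen exactly as in the Lemma, so after scaling they coincide with the corresponding rows of $\bm M_0$.
\item The Dirichlet and Neumann rows are smooth (polynomial, resp.\ polynomial times $\bm n$) functions of the scaled points and of $\bm n$.
\end{itemize}
Polynomials are Lipschitz on the bounded set where the scaled points lie, and $\bm n$ is Lipschitz along $\Gamma$ since $\Gamma\in C^2$. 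Hence each changed entry differs from its straight-line counterpart by $\mc O(R)$, and $\norm{\bm M(R)-\bm M_0}\le C_1 R$.

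Finally, the Lemma gives that $\bm M_0$ is invertible and independent of $R$. Choose $R_0$ with $C_1 R_0\norm{\bm M_0^{-1}}\le 1/2$. Writing $\bm M(R)=\bm M_0\bigl(I+\bm M_0^{-1}(\bm M(R)-\bm M_0)\bigr)$, the Neumann series gives, for all $R\le R_0$, that $\bm M(R)$ is invertible with
\begin{equation*}
\norm{\bm M(R)^{-1}}\le 2\norm{\bm M_0^{-1}},
\end{equation*}
which is uniform in $R$.

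The main subtlety is the Brinkman row. The zeroth-order term $-c^2\phi_j$ scales as $c^2R^2$ relative to $\Delta\phi_j$, so this row is not exactly $R$-independent. Its deviation from the $R=0$ matrix is $\mc O(c^2R^2)$, which can simply be absorbed into the perturbation estimate. One must also confirm that the Lemma's invertibility argument was made for the matrix without this term, i.e.\ with the limiting Brinkman row $2\bm c_5 - d_3(0,1)^T$; that is exactly what the Lemma's computation used.

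Apart from this, the only care needed is that the $\mc O(R)$ constants depend only on the $C^2$ norm of $\Gamma$ and on the fixed $\eta_k^{(d)}$, not on $i$. This makes $R_0$ and the bound uniform over all patches.
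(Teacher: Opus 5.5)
Your proposal is correct and follows essentially the same route as the paper. Both arguments compare $\bm M$ with the straight-line matrix $\wh{\bm M}$ from the Lemma, bound $\norm{\bm M-\wh{\bm M}}_\infty\le CR$ using the $\mc O(R^2)$ displacement of the boundary collocation points and the Lipschitz continuity of the polynomial basis, and conclude via a Neumann series that $\norm{\bm M^{-1}}_\infty\le 2\norm{\wh{\bm M}^{-1}}_\infty$ for $R\le R_0$. Your explicit accounting for the $\mc O(R)$ variation of $\bm n$ in the Neumann rows and for the $c^2R^2$ term in the Brinkman row is a worthwhile refinement that the paper's proof passes over silently.
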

\begin{proof}
Let $\wh{\bm X}(s) = (s,0)$ be the straight line and $\wh{\bm M}$ be the coefficient matrix associated with the collocation problem with the boundary $\wh{\bm X}$. 
Then, using $\wh {\bm M}$ as a preconditioner, we can rewrite the system as follows
\begin{equation}
    \paren{\bm I + \wh{\bm M}^{-1}(\bm M - \wh{\bm M})}\bm c = \wh{\bm M}^{-1}\bm b.
\end{equation}
We need to show that the coefficient matrix is a small perturbation of an identity.
Recall that $\bm X(0) = 0$ and $\partial_s\bm X(0) = (1,0)$, for any point $\bm q_k^{(d)} = \bm X(\eta_k^{(d)}R)$, we have the estimate
\begin{equation}
\begin{aligned}
    |\bm q_k^{(d)} - \wh{\bm X}(\eta_k^{(d)}R)| &= |\bm X(0) + \eta_k^{(d)}R\partial_s\bm X(0) + \frac{(\eta_k^{(d)}R)^2}{2}\bm X(\xi) -\wh{\bm X}(\eta_k^{(d)}R)| \\
    &= |\frac{(\eta_k^{(d)}R)^2}{2}\bm X(\xi)| \leq \frac{1}{2} R^2 \norm{\partial_s^2\bm X}_\infty.
\end{aligned}
\end{equation}
For every $\alpha, \beta \in \mathbb N$, we have
\begin{equation}
\begin{aligned}    
    |\partial_x^\alpha\partial_y^\beta \phi_j (\bm q_k^{(d)}/R) -& \partial_x^\alpha\partial_y^\beta \phi_j (\wh{\bm X}(\eta_k^{(d)}R)/R) | =\frac{1}{R} |\bm q_k^{(d)} - \wh{\bm X}(\eta_k^{(d)}R)||\partial_x^\alpha\partial_y^\beta \phi_j(\bm \xi)| \\
    &\leq \frac{R}{2} \sup_{|\bm \xi|\leq 1}|\partial_x^\alpha\partial_y^\beta \phi_j(\bm \xi)| \norm{\partial_s^2\bm X}_\infty \leq C R ,
\end{aligned}
\end{equation}
where the constant $C$ depends on $\bm X$ but is independent of $R$.
Since $\phi_j$ is a polynomial, the term $\sup_{|\bm \xi|\leq 1}|\partial_x^\alpha\partial_y^\beta \phi_j(\bm \xi)|$ is bounded.
Hence, we have the estimate for the difference matrix $\bm M - \wh{\bm M}$,
\begin{equation}
    \norm{\bm M - \wh{\bm M}}_{\infty} \leq \max_{1\leq i\leq 15}\sum_{j=1}^{15} |\bm M_{ij} - \wh{\bm M}_{ij}| \leq C R .
\end{equation}
Therefore, the perturbation matrix can be bounded as follows
\begin{equation}
    \norm{\wh{\bm M}^{-1}(\bm M - \wh{\bm M})}_\infty \leq \norm{\wh{\bm M}^{-1}}_\infty\norm{\bm M - \wh{\bm M}}_\infty \leq C R.
\end{equation}
Letting $R_0 = 1/(2C)$, then for every $R<R_0$ the perturbation matrix is strictly bounded by $1$ and we have 
\begin{equation}
\begin{aligned}
    \norm{\bm M^{-1}}_\infty & \leq \norm{(\bm I + \wh{\bm M}^{-1}(\bm M-\wh{\bm M}))^{-1}}_\infty \norm{\wh{\bm M}^{-1}}_\infty 
    \\& \leq \frac{\norm{\wh{\bm M}^{-1}}_\infty}{1 - \norm{\wh{\bm M}^{-1}(\bm M - \wh{\bm M})}_\infty} \leq 2 \norm{\wh{\bm M}^{-1}}_\infty.
\end{aligned}
\end{equation}
The bound for $\norm{\bm M^{-1}}_\infty$ is uniform in $R$ since $\wh{\bm M}$ is independent of $R$.
\end{proof}

The constant $C$ that emerged in the proof depends on $\partial_s^2\bm X = -\kappa\bm n$, where $\kappa$ represents the curvature of $\Gamma$. This implies that for curves exhibiting large curvature, particularly those that are non-smooth or highly oscillatory, it is crucial to choose $R$ to be quite small to ensure the collocation problem can be solved.

\section{Numerical examples}\label{sec6}
In this section, we present numerical results for the proposed method applied to the interface problem~\eqref{eqn:stokes-ip} in order to assess its accuracy and efficiency. 
We first solve the interface problem for a range of parameter values and report the accuracy and computational cost of the algorithm. 
We then apply the method to several time-dependent moving interface problems.

The errors $(\bm e_{u}, e_p) = (\bm u-\bm u_h, p - p_h)$ are measured in the discrete $\ell^2$ norms
\begin{equation}
    \norm{\bm e_{u}}^2 
    = h^2\paren{\sum_{i=1}^{N-1}\sum_{j=1}^N \bigl(e_{u,ij}^{(1)}\bigr)^2 
      + \sum_{i=1}^N\sum_{j=1}^{N-1}\bigl(e_{u,ij}^{(2)}\bigr)^2},
    \quad 
    \norm{e_p}^2 
    = h^2 \sum_{i=1}^N\sum_{j=1}^N e_{p,ij}^2.
\end{equation}
We also consider the discrete $\ell^2$-norm of the gradient,
\begin{equation}
\begin{aligned}
    \norm{\nabla_h \bm e_u}^2 
    &= h^2 \paren{\sum_{i=1}^N\sum_{j=1}^{N} \bigl(\delta_{h,1} e_{u,ij}^{(1)}\bigr)^2 
      + \sum_{i=1}^{N-1}\sum_{j=0}^N \rho_j^y \bigl(\delta_{h,2}^- e_{u,ij}^{(1)}\bigr)^2 } \\
    &\quad + h^2\paren{\sum_{i=0}^N \sum_{j=1}^{N-1} \rho_i^x \bigl(\delta_{h,1}^- e_{u,ij}^{(2)}\bigr)^2  
      + \sum_{i=1}^N\sum_{j=1}^N \bigl(\delta_{h,2}^- e_{u,ij}^{(2)}\bigr)^2 }.
\end{aligned}
\end{equation}

In all numerical experiments, the GMRES solver is initialized with the zero vector, and the iteration is terminated when the relative residual satisfies the tolerance $\texttt{tol} = 10^{-8}$. 
All computations are carried out on a desktop computer equipped with an 11th\,Gen Intel\textsuperscript{\textregistered} Core\texttrademark{} i9-11900H CPU (2.50\,GHz) and 15\,GiB of RAM. 
The code is implemented in \texttt{C++} and compiled with \texttt{GCC}~11.4.0.

\subsection{Example 1: Static interface problem with $\kappa_+/\mu_+ = \kappa_-/\mu_-$}
In the first example, we solve the interface problem~\eqref{eqn:stokes-ip} with equal coefficient ratios $\kappa_+/\mu_+ = \kappa_-/\mu_-$ and consider the boundary integral equation~\eqref{eqn:bie-1}. 
To examine the performance of the method for different parameter choices, we fix $\mu_- = 1$ and vary the ratios $\omega = \mu_+ / \mu_-$ and $\eta = \kappa_+ / \mu_+ =  \kappa_- / \mu_-$. 
The computational domain is $\mc B = (-2,2)^2$, discretized by a uniform Cartesian grid, and the interface is taken to be the unit circle centered at the origin.

We employ a manufactured solution to assess the numerical error. 
The right-hand side, interface conditions, and boundary condition on the outer boundary are chosen so that the exact solution is
\begin{equation}
\begin{aligned}
    u^{(1)}(x,y) &= 
    \begin{cases}
        \dfrac{y}{4}\bigl(x^2 + y^2\bigr), & (x,y)\in\Omega_+,\\[4pt]
        \dfrac{y}{r} - \dfrac{3}{4}y, & (x,y)\in\Omega_-,
    \end{cases} \\
    u^{(2)}(x,y) &=
    \begin{cases}
        -\dfrac{1}{4} x y^2, & (x,y)\in\Omega_+,\\[4pt]
        -\dfrac{x}{r} + \dfrac{x}{4}\bigl(3 + x^2\bigr), & (x,y)\in\Omega_-,
    \end{cases} \\
    p(x,y) &=
    \begin{cases}
        5, & (x,y)\in\Omega_+,\\[4pt]
        y\Bigl(-\dfrac{3}{4}x^3 + \dfrac{3}{8}x\Bigr), & (x,y)\in\Omega_-,
    \end{cases}
\end{aligned}
\end{equation}
where $r = \sqrt{x^2 + y^2}$.

For fixed $\eta = 1$, we vary $\omega = 10^k$ for $k = -3,-1,1,3$. 
We also consider the case $\omega = 10$ with $\eta = 10^3$.
In all cases, the numerical results show second-order convergence in the discrete $\ell^2$ norm for the velocity, its discrete gradient, and the pressure. 
Both the velocity and pressure errors are essentially unaffected by the parameters $\omega$ and $\eta$; see Tables~\ref{tab:eg1-1}-\ref{tab:eg1-5}. 
Moreover, for all parameter choices, the GMRES iteration count remains stable as the mesh is refined, and on finer meshes it even decreases slightly, reflecting that the finer discretization better captures the well-conditioned nature of~\eqref{eqn:bie-1}.

\begin{table}[htbp]
\centering
\caption{Numerical results for the Example 1: $\omega = 0.001, \eta = 1$.}
\label{tab:eg1-1}
\begin{tabular}{r r r r r r r r}
\hline
$N$ & $\norm{\bm e_u}$ & Order & $\norm{\nabla_h \bm e_u}$ & Order & $\norm{\bm e_p}$ & Order & \#GMRES \\
\hline
  64  & 6.32e-04 & -    & 1.19e-03 & - & 1.30e-03 & - & 9 \\
 128  & 1.55e-04 & 2.03 & 3.19e-04 & 1.90 & 3.75e-04 & 1.79 & 9 \\
 256  & 3.89e-05 & 1.99 & 8.52e-05 & 1.91 & 1.05e-04 & 1.83 & 8 \\
 512  & 9.23e-06 & 2.07 & 2.24e-05 & 1.92 & 2.93e-05 & 1.85 & 8 \\
1024  & 2.32e-06 & 1.99 & 5.93e-06 & 1.92 & 7.95e-06 & 1.88 & 7 \\
\hline
\end{tabular}
\end{table}

\begin{table}[htbp]
\centering
\caption{Numerical results for the Example 1: $\omega = 0.1, \eta = 1$.}
\label{tab:eg1-2}
\begin{tabular}{r r r r r r r r}
\hline
$N$ & $\norm{\bm e_u}$ & Order & $\norm{\nabla_h \bm e_u}$ & Order & $\norm{\bm e_p}$ & Order & \#GMRES \\
\hline
  64  & 6.23e-04 & - & 1.18e-03 & - & 1.30e-03 & - & 9 \\
 128  & 1.52e-04 & 2.03 & 3.17e-04 & 1.90 & 3.76e-04 & 1.79 & 8 \\
 256  & 3.82e-05 & 2.00 & 8.48e-05 & 1.90 & 1.06e-04 & 1.83 & 8 \\
 512  & 9.13e-06 & 2.06 & 2.24e-05 & 1.92 & 2.95e-05 & 1.85 & 7 \\
1024  & 2.29e-06 & 1.99 & 5.92e-06 & 1.92 & 7.97e-06 & 1.89 & 6 \\
\hline
\end{tabular}
\end{table}

\begin{table}[htbp]
\centering
\caption{Numerical results for the Example 1: $\omega = 10, \eta = 1$.}
\label{tab:eg1-3}
\begin{tabular}{r r r r r r r r}
\hline
$N$ & $\norm{\bm e_u}$ & Order & $\norm{\nabla_h \bm e_u}$ & Order & $\norm{\bm e_p}$ & Order & \#GMRES \\
\hline
  64  & 4.79e-04 & - & 1.37e-03 & - & 2.50e-03 & - & 10 \\
 128  & 1.18e-04 & 2.02 & 3.61e-04 & 1.92 & 6.32e-04 & 1.98 & 10 \\
 256  & 3.10e-05 & 1.93 & 9.73e-05 & 1.89 & 1.69e-04 & 1.90 &  9 \\
 512  & 7.45e-06 & 2.05 & 2.34e-05 & 2.06 & 4.01e-05 & 2.08 &  8 \\
1024  & 1.94e-06 & 1.94 & 6.15e-06 & 1.93 & 1.05e-05 & 1.92 &  6 \\
\hline
\end{tabular}
\end{table}

\begin{table}[htbp]
\centering
\caption{Numerical results for the Example 1: $\omega = 1000, \eta = 1$.}
\label{tab:eg1-4}
\begin{tabular}{r r r r r r r r}
\hline
$N$ & $\norm{\bm e_u}$ & Order & $\norm{\nabla_h \bm e_u}$ & Order & $\norm{\bm e_p}$ & Order & \#GMRES \\
\hline
  64  & 8.33e-04 & - & 1.84e-03 & - & 3.35e-03 & - & 13 \\
 128  & 2.11e-04 & 1.98 & 4.76e-04 & 1.95 & 8.65e-04 & 1.96 & 13 \\
 256  & 5.98e-05 & 1.82 & 1.30e-04 & 1.87 & 2.16e-04 & 2.00 & 11 \\
 512  & 7.56e-06 & 2.98 & 2.53e-05 & 2.36 & 4.80e-05 & 2.17 & 10 \\
1024  & 1.94e-06 & 1.96 & 6.55e-06 & 1.95 & 1.25e-05 & 1.94 &  9 \\
\hline
\end{tabular}
\end{table}

\begin{table}[htbp]
\centering
\caption{Numerical results for the Example 1: $\omega = 10, \eta = 1000$.}
\label{tab:eg1-5}
\begin{tabular}{r r r r r r r r}
\hline
$N$ & $\norm{\bm e_u}$ & Order & $\norm{\nabla_h \bm e_u}$ & Order & $\norm{\bm e_p}$ & Order & \#GMRES \\
\hline
  64  & 5.64e-04 & - & 1.47e-03 & - & 2.51e-03 & - & 11 \\
 128  & 1.47e-04 & 1.94 & 3.89e-04 & 1.91 & 6.31e-04 & 1.99 & 11 \\
 256  & 4.25e-05 & 1.79 & 1.07e-04 & 1.86 & 1.69e-04 & 1.90 & 10 \\
 512  & 8.41e-06 & 2.34 & 2.34e-05 & 2.20 & 3.99e-05 & 2.08 &  9 \\
1024  & 2.36e-06 & 1.84 & 6.20e-06 & 1.92 & 1.05e-05 & 1.92 &  6 \\
\hline
\end{tabular}
\end{table}

\subsection{Example 2: Static interface problem with $\kappa_+/\mu_+ \neq \kappa_-/\mu_-$}
In this example, we consider the generic case with different coefficient ratios, 
$\kappa_+/\mu_+ \neq \kappa_-/\mu_-$, and solve the interface problem~\eqref{eqn:stokes-ip}
using the boundary integral equation~\eqref{eqn:bie-2}.
To examine the performance of the method for different parameter choices, we fix $\mu_- = 1$
and vary the parameters $\mu_+$ and $\kappa_+, \kappa_-$.
As in the previous example, we use a manufactured solution to evaluate the numerical error.
The right-hand side, interface conditions, and outer boundary condition are chosen so that the
exact solution satisfies
\begin{equation}
\begin{aligned}
    u^{(1)}(x,y) &= 
    \begin{cases}
        \sin x \cos y, & (x,y)\in\Omega_+,\\[4pt]
        \dfrac{y}{r} - \dfrac{3}{4}y, & (x,y)\in\Omega_-,
    \end{cases} \\
    u^{(2)}(x,y) &=
    \begin{cases}
        -\cos x \sin y, & (x,y)\in\Omega_+,\\[4pt]
        -\dfrac{x}{r} + \dfrac{x}{4}\bigl(3 + x^2\bigr), & (x,y)\in\Omega_-,
    \end{cases} \\
    p(x,y) &=
    \begin{cases}
        \sin x \sin y, & (x,y)\in\Omega_+,\\[4pt]
        \Bigl(-\dfrac{3}{4}x^3 + \dfrac{3}{8}x\Bigr)y, & (x,y)\in\Omega_-,
    \end{cases}
\end{aligned}
\end{equation}
where $r = \sqrt{x^2 + y^2}$.

The computational domain is chosen as $\mc B = (-2,2)^2$.
The interface is given by a six-fold flower,
\begin{equation}
    \bm X(\theta) = \bigl(1 + 0.1 \cos (6\theta)\bigr) 
    \begin{pmatrix}
        \cos \theta\\[2pt]
        \sin \theta
    \end{pmatrix},\quad 
    \theta\in [0, 2\pi).
\end{equation}

The numerical results show that the errors in the velocity, the discrete velocity gradient,
and the pressure all converge at second order in the $\ell^2$ norm.
In contrast to the equal coefficient-ratio case, however, the GMRES iteration count is now
affected by the coefficient ratios: when the interior viscosity $\mu_+$ and the coefficient
$\kappa_+$ are large, more iterations are required for convergence.
This behavior is likely related to the hypersingular terms present in~\eqref{eqn:bie-2},
which make the iterative convergence more sensitive to the coefficients.
Nevertheless, for any fixed choice of $(\mu_+,\kappa_+,\kappa_-)$, the required iteration
numbers are only mildly dependent on the mesh size and typically decrease slightly as the
mesh is refined, indicating an inherent regularizing effect of the discretization that
stabilizes the GMRES iteration.

\begin{table}[htbp]
\centering
\caption{Numerical results for the Example 2: $\mu_+ = 10, \mu_- = 1, \kappa_+ = 1, \kappa_- = 1$.}
\label{tab:eg2-1}
\begin{tabular}{r r r r r r r r}
\hline
$N$ & $\norm{\bm e_u}$ & Order & $\norm{\nabla_h \bm e_u}$ & Order & $\norm{\bm e_p}$ & Order & \#GMRES \\
\hline
  64  & 9.49e-03 & - & 7.13e-03 & - & 1.82e-02 & - & 56 \\
 128  & 9.18e-04 & 3.37 & 6.40e-04 & 3.48 & 1.45e-03 & 3.64 & 45 \\
 256  & 1.90e-04 & 2.28 & 1.19e-04 & 2.43 & 3.50e-04 & 2.05 & 41 \\
 512  & 4.54e-05 & 2.06 & 1.85e-05 & 2.68 & 8.58e-05 & 2.03 & 37 \\
1024  & 1.33e-05 & 1.77 & 3.50e-06 & 2.40 & 2.67e-05 & 1.68 & 38 \\
\hline
\end{tabular}
\end{table}

\begin{table}[htbp]
\centering
\caption{Numerical results for the Example 2: $\mu_+ = 0.1, \mu_- = 1, \kappa_+ = 1, \kappa_- = 1$.}
\label{tab:eg2-2}
\begin{tabular}{r r r r r r r r}
\hline
$N$ & $\norm{\bm e_u}$ & Order & $\norm{\nabla_h \bm e_u}$ & Order & $\norm{\bm e_p}$ & Order & \#GMRES \\
\hline
  64  & 7.85e-04 & - & 1.46e-03 & - & 1.75e-03 & - & 23 \\
 128  & 1.69e-04 & 2.22 & 1.67e-04 & 3.13 & 3.12e-04 & 2.49 & 21 \\
 256  & 5.04e-05 & 1.75 & 2.84e-05 & 2.56 & 6.84e-05 & 2.19 & 19 \\
 512  & 1.01e-05 & 2.31 & 4.06e-06 & 2.81 & 1.36e-05 & 2.33 & 18 \\
1024  & 2.04e-06 & 2.31 & 5.53e-07 & 2.88 & 3.02e-06 & 2.17 & 18 \\
\hline
\end{tabular}
\end{table}

\begin{table}[htbp]
\centering
\caption{Numerical results for the Example 2: $\mu_+ = 0.1, \mu_- = 1, \kappa_+ = 1, \kappa_- = 100$.}
\label{tab:eg2-3}
\begin{tabular}{r r r r r r r r}
\hline
$N$ & $\norm{\bm e_u}$ & Order & $\norm{\nabla_h \bm e_u}$ & Order & $\norm{\bm e_p}$ & Order & \#GMRES \\
\hline
  64  & 2.09e-03 & - & 5.26e-03 & - & 8.33e-03 & - & 21 \\
 128  & 6.08e-04 & 1.78 & 6.99e-04 & 2.91 & 1.25e-03 & 2.74 & 20 \\
 256  & 1.94e-04 & 1.65 & 1.59e-04 & 2.14 & 3.67e-04 & 1.77 & 19 \\
 512  & 3.36e-05 & 2.53 & 2.02e-05 & 2.98 & 6.83e-05 & 2.43 & 19 \\
1024  & 5.95e-06 & 2.49 & 2.87e-06 & 2.82 & 1.44e-05 & 2.24 & 18 \\
\hline
\end{tabular}
\end{table}

\begin{table}[htbp]
\centering
\caption{Numerical results for the Example 2: $\mu_+ = 0.1, \mu_- = 1, \kappa_+ = 100, \kappa_- = 1$.}
\label{tab:eg2-4}
\begin{tabular}{r r r r r r r r}
\hline
$N$ & $\norm{\bm e_u}$ & Order & $\norm{\nabla_h \bm e_u}$ & Order & $\norm{\bm e_p}$ & Order & \#GMRES \\
\hline
  64  & 2.20e-03 & - & 3.76e-03 & - & 3.04e-02 & - & 46 \\
 128  & 2.79e-04 & 2.98 & 6.08e-04 & 2.63 & 1.83e-03 & 4.05 & 40 \\
 256  & 5.87e-05 & 2.25 & 9.79e-05 & 2.63 & 4.77e-04 & 1.94 & 36 \\
 512  & 1.10e-05 & 2.42 & 1.36e-05 & 2.84 & 4.89e-05 & 3.28 & 35 \\
1024  & 3.03e-06 & 1.86 & 2.00e-06 & 2.77 & 1.56e-05 & 1.65 & 35 \\
\hline
\end{tabular}
\end{table}

\subsection{Example 3: moving elastic interface in Brinkman flow}
In this example, we consider a moving interface problem.
Let $\Gamma$ be a moving elastic interface parameterized by 
$\bm X(s, t): \mathbb S\times [0,T]\to \mathbb R^2$, 
where $s$ denotes the Lagrangian coordinate.
At each fixed time $t$, the velocity field is determined by the Brinkman system~\eqref{eqn:stokes-ip}
with jump conditions $\jump{\bm u}  = \bm 0$ and $\jump{\bm\sigma \bm n}  = \bm F$, 
where $\bm F$ is the elastic tension force arising from stretching of $\Gamma$,
\begin{equation}\label{eqn:els-force}
    \bm F(\bm X(s,t), t) 
    = \frac{1}{|\partial_s \bm X|}\,\partial_{s}\!\left(\mc T(|\partial_s \bm X|)\,\frac{\partial_s \bm X}{|\partial_s \bm X|}\right), 
    \qquad 
    \mc T(|\partial_s \bm X|) = T_0\left( |\partial_s \bm X| - \frac{L_0}{2\pi}\right),
\end{equation}
where $T_0$ is a constant depending on the elastic properties of the interface and $L_0$ is the relaxed length of $\Gamma$.
In the following, we set $L_0 = 0$ and $T_0 = 1$.
The coefficients are chosen as $\mu^+ = \kappa^+ = 1$ and $\mu^- = \kappa^- = 0.1$.

Given $\Gamma(t)$, the elastic force $\bm F$ determines the velocity field 
$\bm u = \bm u[\bm X(\cdot, t)]$, and the interface is advected by the flow:
\begin{equation}
    \bm X(s,0) = \bm X_0(s), \qquad 
    \partial_t \bm X(s, t) = \bm u[\bm X(\cdot, t)]\bigl(\bm X(s,t)\bigr), 
    \quad s\in \mathbb S,\; t\in[0,T],
\end{equation}
where $\bm X_0$ is a prescribed initial configuration of $\Gamma$.

The interface $\Gamma$ is discretized by partitioning $[0,2\pi]$ into uniformly spaced points 
$s_i = i\Delta s$, $i=0,1,\ldots,M-1$, with $\Delta s = 2\pi / M$.
Let $\bm X_h^n(\cdot)$ denote the numerical approximation of $\bm X(\cdot, t_n)$ based on periodic cubic splines.
We advance the interface in time using a second-order explicit Runge–Kutta scheme: for $i=0,1,\ldots,M-1$,
\begin{equation}
\begin{aligned}
    \bm X_h^{n+\frac{1}{2}}(s_i) &= \bm X_h^n(s_i) 
        + \frac{\Delta t}{2}\,\bm u_h[\bm X_h^n]\bigl(\bm X_h^n(s_i)\bigr),\\ 
    \bm X_h^{n+1}(s_i) &= \bm X_h^n(s_i) 
        + \Delta t\,\bm u_h[\bm X_h^{n+\frac{1}{2}}]\bigl(\bm X_h^{n+\frac{1}{2}}(s_i)\bigr).
\end{aligned}    
\end{equation}

We study the relaxation of a stretched elastic interface in a viscous Brinkman flow.
The initial shape is given by a six-fold flower,
\begin{equation}
    \bm X_0(\theta) = \bigl(1 + 0.2 \cos (6\theta)\bigr)
    \begin{pmatrix}
        \cos \theta\\[2pt]
        \sin \theta
    \end{pmatrix},\quad 
    \theta\in [0, 2\pi).
\end{equation}
The only driving mechanism is the elastic force generated by the interface.
Due to the incompressibility of the enclosed fluid, the interface evolves at nearly fixed area and relaxes toward a circular equilibrium, as illustrated in Figure~\ref{fig:eg3-fig}.
In the limiting configuration, the pressure exhibits a nonzero jump across the interface to balance the normal elastic force.

The time evolution of the enclosed area and its relative error, together with the GMRES iteration counts and the elastic energy, is shown in Figure~\ref{fig:eg3-res} for meshes with $N=64,128,256$.
On a grid with $N=64$, the enclosed area is preserved with an error of order $10^{-4}$, and this area conservation is further improved on finer meshes.
As the interface relaxes toward a circle, the GMRES iteration count decreases over time.

\begin{figure}[htbp]
    \centering
    \includegraphics[width=0.3\linewidth]{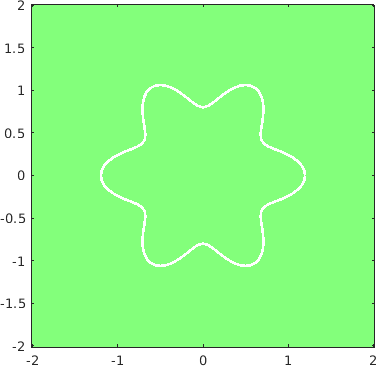}
    \includegraphics[width=0.3\linewidth]{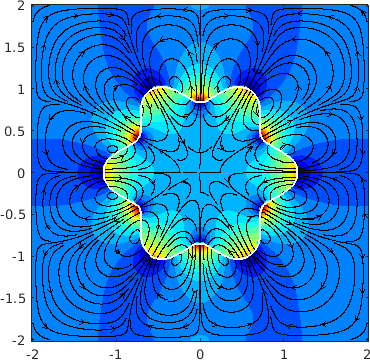}
    \includegraphics[width=0.3\linewidth]{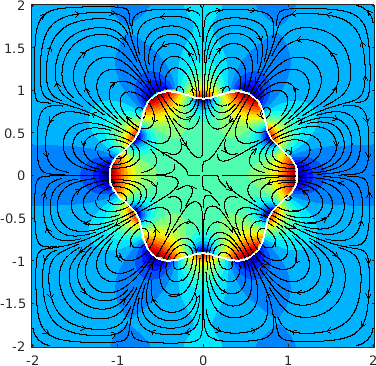}
    \includegraphics[width=0.3\linewidth]{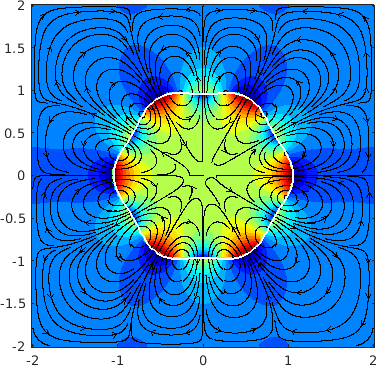}
    \includegraphics[width=0.3\linewidth]{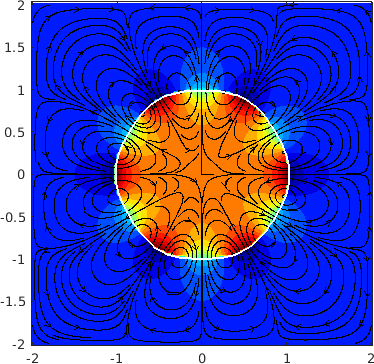}
    \includegraphics[width=0.3\linewidth]{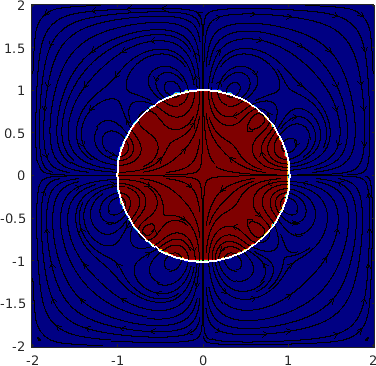}
    \caption{Time evolution of the elastic interface and velocity field in the Brinkman flow. Starting from a highly perturbed shape, the interface relaxes and approaches a circle.}
    \label{fig:eg3-fig}

\end{figure}

\begin{figure}[htbp]
    \centering
    \includegraphics[width=0.3\linewidth]{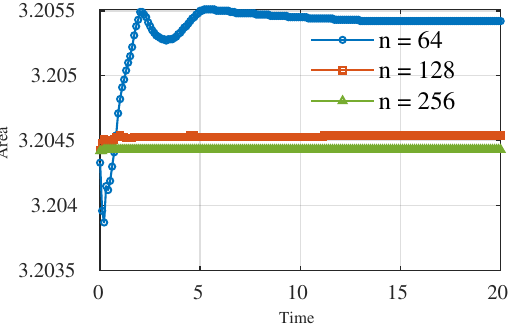}
    \includegraphics[width=0.3\linewidth]{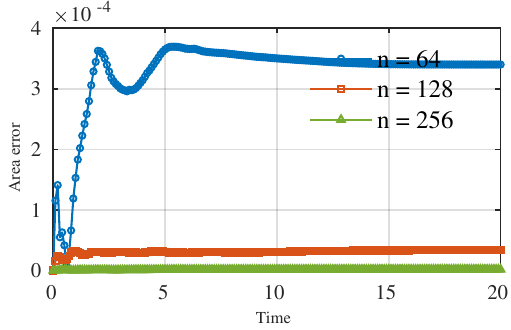}
    \includegraphics[width=0.3\linewidth]{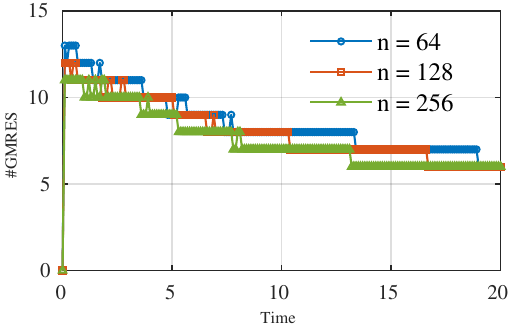}
    \caption{Time evolution of the enclosed area, area error, and GMRES iteration count for $N = 64, 128, 256$. }
    \label{fig:eg3-res}
\end{figure}

\subsection{Example 4: moving elastic interface in unsteady Stokes flow}
In this example, we consider the evolution of an elastic interface in an unsteady Stokes flow.
\begin{align}\label{eqn:unsteady-stokes}
    \partial_t \bm u &= \mu \Delta \bm u - \grad p, 
    \quad \grad\cdot\bm u = 0, 
    \quad \text{in } \mc B \setminus \Gamma,
\end{align}
together with the interface conditions $\jump{\bm u} = 0$ and $\jump{\bm \sigma \bm n} = \bm F$, where $\bm F$ is given in~\eqref{eqn:els-force}.
The viscosities are chosen as $\mu_+ = 1$ and $\mu_- = 0.1$.
The initial interface is given by
\begin{equation}
    \bm X(\theta) = \Big(
        \cos\theta,\,
        \sin\theta\big(1 + 0.4 \sin\theta \sin 3\theta \sin 5\theta\big) 
        + 0.2 \sin(\cos\theta)
    \Big)^T, 
    \quad \theta \in [0,2\pi).
\end{equation}

We discretize~\eqref{eqn:unsteady-stokes} in time using the backward Euler scheme,
\begin{equation}
    \frac{\bm u^{n+1} - \bm u^n}{\Delta t} 
    = \mu \Delta \bm u^{n+1} - \grad p^{n+1},
    \quad \grad\cdot\bm u^{n+1} = 0,
\end{equation}
with interface conditions $\jump{\bm u^{n+1}} = 0$ and 
$\jump{\bm \sigma^{n+1} \bm n^{n+1}} = \bm F^{n+1}$.
This time discretization leads, at each time step, to an elliptic interface problem of the form~\eqref{eqn:stokes-ip} with $\kappa_+ = \kappa_- = 1/\Delta t$, which we solve using the boundary integral formulation~\eqref{eqn:bie-2}.

The time evolution of the interface and the flow field is shown in Figure~\ref{fig:eg4-fig}.
The initially complex interface relaxes due to viscous dissipation and eventually approaches a circular shape.
The time evolution of the enclosed area, the numerical area error, and the GMRES iteration counts are presented in Figure~\ref{fig:eg4-res} for $N = 64, 128, 256$.
The relative area error remains below $10^{-3}$ and decreases under mesh refinement.
The GMRES iteration count decreases from about $30$–$40$ at early times to about $10$–$20$ as the interface becomes nearly circular.

\begin{figure}[htbp]
    \centering
    \includegraphics[width=0.3\linewidth]{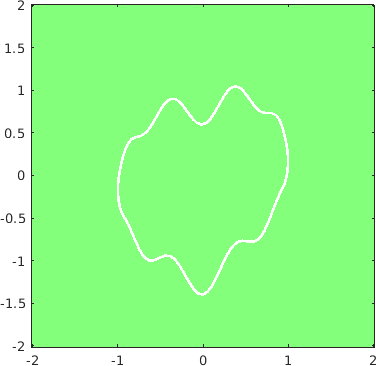}
    \includegraphics[width=0.3\linewidth]{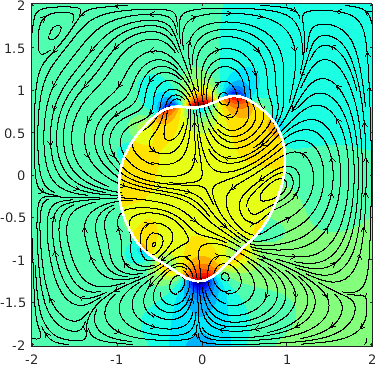}
    \includegraphics[width=0.3\linewidth]{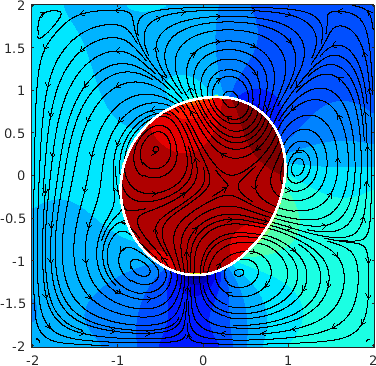}
    \includegraphics[width=0.3\linewidth]{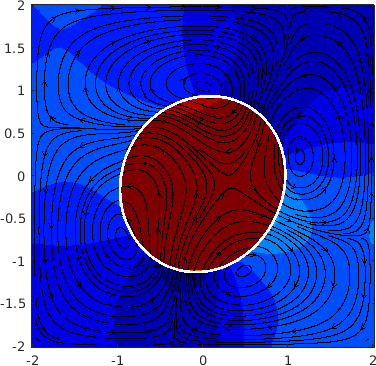}
    \includegraphics[width=0.3\linewidth]{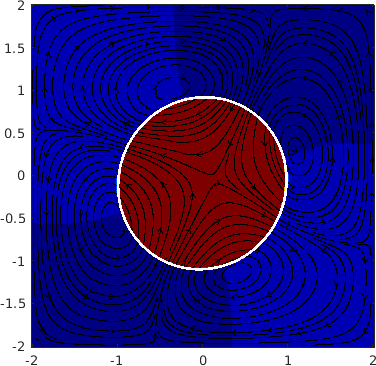}
    \includegraphics[width=0.3\linewidth]{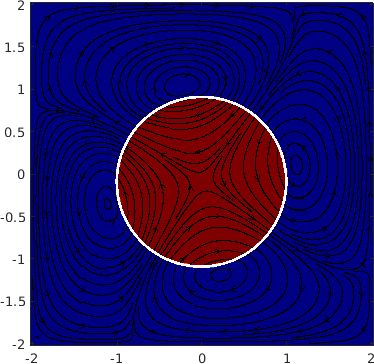}
    \caption{Time evolution of the elastic interface and velocity field in unsteady Stokes flow. Starting from a highly perturbed shape, the interface relaxes and approaches a circle.}
    \label{fig:eg4-fig}
\end{figure}
\begin{figure}[htbp]
    \centering
    \includegraphics[width=0.3\linewidth]{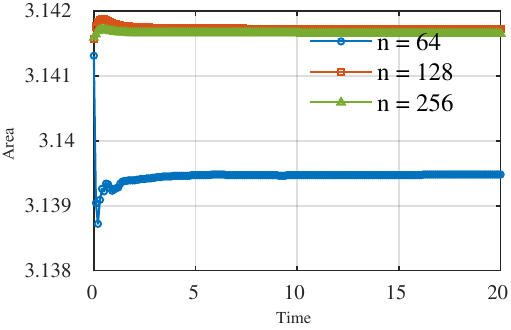}
    \includegraphics[width=0.3\linewidth]{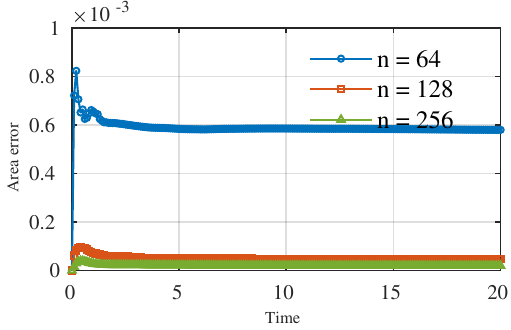}
    \includegraphics[width=0.3\linewidth]{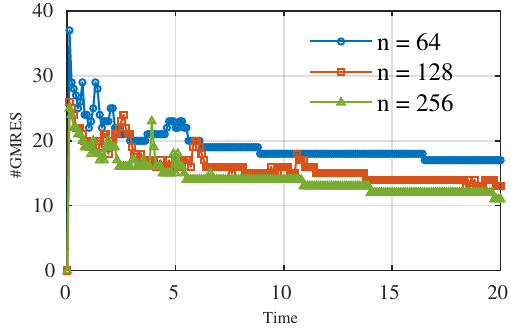}
    \caption{Time evolution of the enclosed area, area error, and GMRES iteration count for $N = 64, 128, 256$. }
    \label{fig:eg4-res}
\end{figure}

\subsection{Example 5: moving elastic interface in shear flow}
In this example, we consider an elastic interface in a Brinkman shear flow with background velocity $\bm u_{\infty}(x,y) = \gamma (y, 0)^T, \qquad \gamma = 1$. The initial interface is a uniformly stretched circle of radius $0.5$ centered at the origin.
The total velocity is decomposed as $\bm u = \bm u_\infty + \bm u_1$, where $\bm u_1$ satisfies the interface problem~\eqref{eqn:stokes-ip} with the same jump conditions as in Example~3.

We examine two viscosity contrasts:
(i) $\mu^+ = 1$, $\mu^- = 0.5$ and
(ii) $\mu^+ = 1$, $\mu^- = 0.1$, with $\kappa^\pm = \mu^\pm$.
The time evolution of the interface for these two cases is shown in Figures~\ref{fig:eg5-case1} and~\ref{fig:eg5-case2}, respectively.
A Lagrangian marker on the interface is indicated by a green dot to track the tangential motion.

For the lower interior viscosity, case~(ii), the interface is strongly stretched by the shear flow and becomes a prolate ellipse.
For the higher interior viscosity, case~(i), the enclosed fluid behaves more like a rigid inclusion, and the deformation is weaker.
In both cases, the green marker moves tangentially along the interface, exhibiting the characteristic tank-treading motion.

\begin{figure}[htbp]
    \centering
    \includegraphics[width=0.23\linewidth]{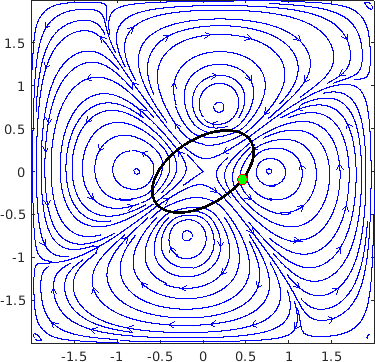}
    \includegraphics[width=0.23\linewidth]{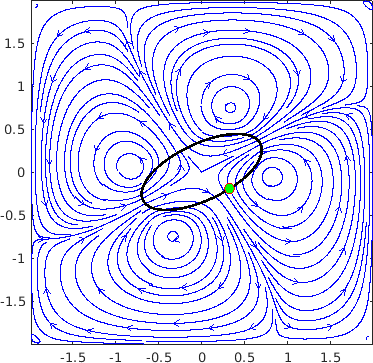}
    \includegraphics[width=0.23\linewidth]{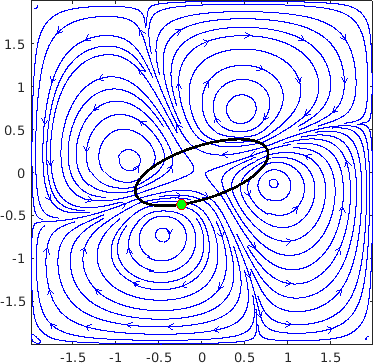}
    \includegraphics[width=0.23\linewidth]{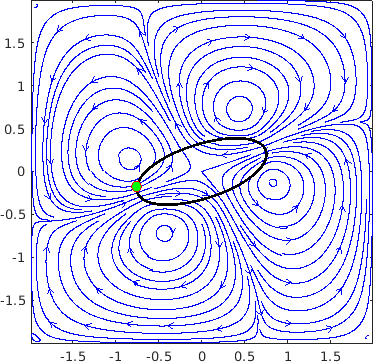}
    \caption{
    Time evolution of the elastic interface in Brinkman shear flow for 
    $\mu^+ = 1$, $\mu^- = 0.5$ with $\kappa^\pm = \mu^\pm$ at 
    $t = 1, 2, 5, 10$.
    }
    \label{fig:eg5-case1}
\end{figure}

\begin{figure}[htbp]
    \centering
    \includegraphics[width=0.23\linewidth]{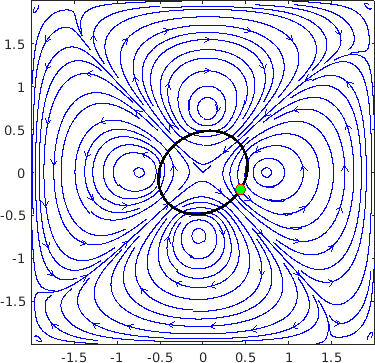}
    \includegraphics[width=0.23\linewidth]{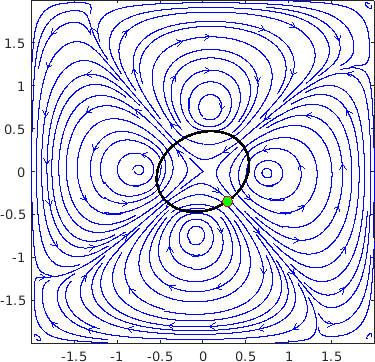}
    \includegraphics[width=0.23\linewidth]{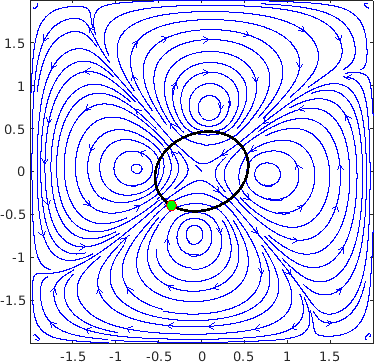}
    \includegraphics[width=0.23\linewidth]{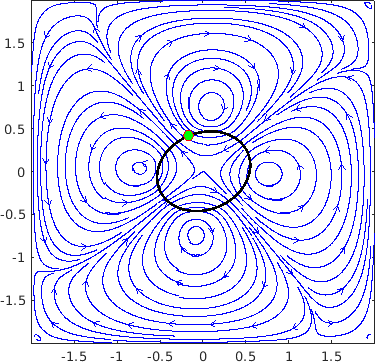}
    \caption{
    Time evolution of the elastic interface in Brinkman shear flow for 
    $\mu^+ = 1$, $\mu^- = 0.1$ with $\kappa^\pm = \mu^\pm$ at 
    $t = 1, 2, 5, 10$.
    }
    \label{fig:eg5-case2}
\end{figure}

\section{Discussion}\label{sec7}
In this work, we present a correction-function-based kernel-free boundary integral method for solving Brinkman-type interface problems in two-dimensional space. Due to the discontinuities in the coefficients and the solution, we first formulate the interface problem in boundary integral form, where the integral operators are indirectly defined as traces of normal derivatives of layer potentials that satisfy simpler constant-coefficient interface problems. Such a formulation does not require the analytical expression of the Green's function. The constant-coefficient interface problems are solved on a staggered grid using the MAC scheme with tailored correction terms at grid nodes near the interface. We introduce a collocation method to construct the correction function in a narrow band around the interface and prove the solvability of the discrete collocation problem. A significant advantage of the collocation method is that it avoids the complicated and tedious derivation of jump terms needed to correct the right-hand side and to construct piecewise smooth interpolants. It also removes the need to find the intersection points between the interface and the Cartesian grid, which is a nontrivial computational geometry task that complicates the implementation.

Several numerical examples with fixed interfaces demonstrate that the proposed method is both accurate and efficient, and it remains robust under large coefficient contrasts. 
Applications involving moving interfaces further indicate that the method is well suited for simulating complex moving boundary problems, such as those arising in fluid mechanics and biophysics.

The current methodology can be extended to three dimensions by combining it with suitable surface discretization techniques, such as triangular meshes or implicit-surface representations~\cite{Beale2006}, although this becomes substantially more intricate when the interface evolves over time. 
Future work also includes coupling the Brinkman equation with Darcy flow through more sophisticated interface conditions, such as the Beavers-Joseph-Saffman condition.




\section*{Acknowledgement}
W.~Ying is supported by the National Natural Science Foundation of China, Division of Mathematical Sciences (Project No.~12471342), and the Fundamental Research Funds for the Central Universities of China.

\bibliographystyle{elsarticle-num}
\bibliography{references}

\end{document}